\newtheorem{theorem}{Theorem}[section]
\newtheorem*{theorem*}{Theorem}
\newtheorem{lemma}[theorem]{Lemma}
\newtheorem{prop}[theorem]{Proposition}
\newtheorem{cor}[theorem]{Corollary}
\theoremstyle{definition}
\newtheorem{definition}[theorem]{Definition}
\numberwithin{equation}{section}
\newcommand{\ho}[2]{H_{#1}(#2)}
\newcommand{\lbch}[4]{LCH^{\epsilon_{#1},\epsilon_{#2}}_{#3}(#4)}
\newcommand{\lbsch}[4]{LCH^{\epsilon^S_{#1},\epsilon^S_{#2}}_{#3}(#4)}
\newcommand{\lbcch}[4]{LCH_{\epsilon_{#1},\epsilon_{#2}}^{#3}(#4)}
\newcommand{\R}{\mathbb{R}}
\renewcommand{\L}{\Lambda}
\renewcommand{\epsilon}{\varepsilon}
\newcommand{\Z}{\mathbb{Z}}
\newcommand{\N}{\mathbb{N}}
\newcommand{\M}[3]{\mathcal{M}(#1;#2,\ldots,#3)}
\renewcommand{\Mc}[4]{\mathcal{M}_{#4}(#1;#2,\ldots,#3)}
\newcommand{\CE}[1]{\mathcal{A}(#1)}
\newcommand{\im}{\operatorname{im}}
\begin{document}
	\title{A note on geography of bilinearized Legendrian contact homology for disconnected Legendrian submanifolds}
	\author{Filip Strakoš}
	\address{Department of Mathematics, Geometry and Physics, Uppsala University,\newline Lägerhyddsvägen~1, 751 06 Uppsala, Sweden}
	\email{filip.strakos@math.uu.se}
	\urladdr{https://sites.google.com/view/filipstrakos/home}

	\begin{abstract}
		In this short note, we provide a criterion for DGA-homotopy of augmentations of Chekanov-Eliashberg algebra of disconnected Legendrian submanifolds. We apply the criterion to obtain the extension of geography results of Bourgeois and Galant concerning bilinearized Legendrian contact homology  to the case of disconnected Legendrian submanifolds.
	\end{abstract}
	\maketitle
	\section{Introduction}	
	
	For $M$ an $n$-dimensional smooth manifold we denote by $J^1(M)=T^*M\times \R$ its one-jet bundle. We endow it with a canonical contact structure given by the kernel of the co-oriented one-form $dz-\eta$, where $\eta$ is the Liouville one form on $T^*M$. To study the Legendrian isotopy classes of such Legendrians we can define the Chekanov-Eliashberg algebra $(\mathcal{A}(\Lambda),\partial)$ for generic closed Legendrian submanifold $\L$ of $J^1(M)$ (see \cite{ChekanovLCH}, \cite{EEScontacthomology}). We will consider Legendrian submanifolds whose Maslov class vanishes to obtain $\Z$-graded differential graded algebra. Where differential $\partial$ counts rigid pseudoholomorphic disks in the symplectization of $J^1(M)$.
	
	The homology of $\mathcal{A}(\Lambda)$ is hard to work with and so we (bi)linearize the differential using augmentations $\epsilon:(\mathcal{A},\partial)\to (Z_2,0)$ (see \cite{ChekanovLCH},\cite{EEScontacthomology} for linearization and \cite{bil} for bilinearization). We denote by $LCH^\epsilon(\Lambda)$ and $LCH^{\epsilon_1,\epsilon_2}(\Lambda)$ the linearized and bilinearized Legendrian contact homology respectively.
	
	In this context, the question of the DGA-homotopy of augmentations of the Chekanov-Eliashberg algebra naturally appears. In \cite{bil} Bourgeois and Chantraine proved that the cardinality of the set $$\mathcal{E}(\Lambda)=\{[\epsilon]_\sim \, \,|\,\epsilon:(\CE{\Lambda},\partial)\to (\Z_2,0) \text{ is an augmentation of } \CE{\Lambda}\}$$ of DGA-homotopy classes of augmentations is a Legendrian isotopy invariant. It is not a simple task to decide whether two given augmentations $\epsilon_1$ and $\epsilon_2$ belong to the same the DGA-homotopy class. Therefore, it is surprising that there exist criteria that can be used to distinguish those classes.
	
	The first hints of the existence of such a criterion can be tracked to work on duality long exact sequence of Ekholm, Etnyre and Sabloff (see \cite{dualityseq}). Combining those with results concerning bilinearized Legendrian contact homology of Bourgeois and Chantraine (see \cite{bil}) leads to a necessary condition that reads:
	\begin{equation*}\label{odkaz na jedn}
		\text{If } \epsilon_1\text{ and } \epsilon_2 \text{ are DGA-homotopic, then } \tau_{0} \text{ vanishes.}
	\end{equation*}
	Here $\tau_0:LCH^{\epsilon_1,\epsilon_2}_0(\Lambda)\to H_0(\L)$ is the map from the duality long exact sequence relating bilinearized Legendrian contact homology and the Morse homology of a connected Legendrian submanifold. This condition was found to be also sufficient by Bourgeois and Galant in \cite{geog} for connected Legendrian submanifolds. 
	
	For disconnected Legendrian submanifolds this condition fails. One can easily find a Legendrian link with non-vanishing $\tau_0$ arising from the duality sequence for two DGA-homotopic augmentations. This happens because the condition does not pass to the chain level anymore.
	
	However, in the connected case, one can show using the duality that $\tau_{0}$ vanishes if and only if $\tau_n:LCH^{\epsilon_1,\epsilon_2}_n(\Lambda)\to H_n(\L)$ is surjective, in particular, if $\tau_n$ hits the fundamental class of the Legendrian since it is connected. Therefore, we restate and prove the condition so that it holds even in the disconnected case:
	
	\begin{theorem}\label{main thm}
		Let $M$ be an $n$-dimensional smooth manifold and $\Lambda$ for be a closed Legendrian submanifold of $(J^1(M),dz-\eta)$ with vanishing Maslov class. Denote by $[\Lambda]$ its fundamental class. Let  $\epsilon_1,\epsilon_2$ be two augmentations of the Chekanov-Eliashberg algebra $\CE{\Lambda}$ over $\Z_2$. Then the following holds:
		\begin{equation}
			\epsilon_1 \text{ and } \epsilon_2 \text{ are DGA homotopic } \,\Leftrightarrow [\Lambda] \text{ is an  element of the image of } \tau_{n} .
		\end{equation}
	\end{theorem}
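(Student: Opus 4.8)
The plan is to re-express both sides of the equivalence in terms of a single cohomology class, namely that of the sum $\epsilon_1+\epsilon_2$ of the two augmentations in bilinearized contact cohomology, and then to read the statement off the duality long exact sequence. The first ingredient is an algebraic reformulation of DGA-homotopy, essentially due to \cite{bil}. Regarding $\epsilon_1+\epsilon_2$ as the $\Z_2$-linear functional on the degree-$0$ Reeb chords of $\L$, a one-line telescoping argument using $\epsilon_i\circ\partial=0$ shows that it is a cocycle in the bilinearized cochain complex, hence represents a class $[\epsilon_1+\epsilon_2]\in\lbcch{2}{1}{0}{\L}$. On the other hand, a DGA-homotopy between $\epsilon_1,\epsilon_2\colon(\CE{\L},\partial)\to(\Z_2,0)$ is an $(\epsilon_1,\epsilon_2)$-derivation $K\colon\CE{\L}\to\Z_2$ with $K\circ\partial=\epsilon_1+\epsilon_2$; such a $K$ is automatically supported on the degree-$(-1)$ generators, so it is the same datum as a bilinearized $(-1)$-cochain, and the homotopy equation says exactly that this cochain is a primitive of $\epsilon_1+\epsilon_2$. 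Using also the symmetry of DGA-homotopy, this yields the criterion
\begin{equation*}
	\epsilon_1\text{ and }\epsilon_2\text{ are DGA-homotopic}\ \Longleftrightarrow\ [\epsilon_1+\epsilon_2]=0\ \text{ in }\ \lbcch{2}{1}{0}{\L},
\end{equation*}
which uses nothing about connectedness and which, when $\L$ is connected, specializes to the condition $\tau_0=0$ of \cite{geog}.

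Next I would recall, from \cite{dualityseq} and its bilinearization in \cite{bil}, the relevant part of the duality long exact sequence
\begin{equation*}
	\cdots\to\lbch{1}{2}{n}{\L}\xrightarrow{\ \tau_n\ }\ho{n}{\L}\xrightarrow{\ g_n\ }\lbcch{2}{1}{0}{\L}\to\lbch{1}{2}{n-1}{\L}\to\cdots,
\end{equation*}
together with the chain-level description of the connecting homomorphism $g_n$. Exactness at $\ho{n}{\L}$ gives $[\L]\in\operatorname{im}\tau_n\Leftrightarrow g_n([\L])=0$, so the whole theorem reduces to the single identity
\begin{equation*}
	g_n([\L])=[\epsilon_1+\epsilon_2]\quad\text{in}\ \lbcch{2}{1}{0}{\L}.
\end{equation*}
Granting it, $\epsilon_1\sim\epsilon_2\Leftrightarrow[\epsilon_1+\epsilon_2]=0\Leftrightarrow g_n([\L])=0\Leftrightarrow[\L]\in\operatorname{im}\tau_n$, so both directions of the theorem come out simultaneously, with no separate argument needed for either implication.

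To prove the identity I would work in the standard model for the duality sequence, built from $\L$ together with a small pushoff $\L_f$ determined by a Morse function $f$ on $\L$, chosen with exactly one maximum on each component, so that the fundamental class $[\L]=\sum_i[\L_i]$ is represented by the sum of the index-$n$ critical points. Lifting this cycle through the short exact sequence of complexes that defines the sequence, applying the differential, and reading off the resulting element of the bilinearized cochain complex, one finds that its coefficient at a degree-$0$ Reeb chord $c$ is the bilinearized count of rigid holomorphic disks on $\L\cup\L_f$ with a prescribed positive mixed corner. By the degeneration-and-gluing analysis of the moduli spaces in \cite{dualityseq} this count becomes a sum over decompositions of the disk which telescopes --- exactly as in the algebraic step above --- to $\epsilon_1(c)+\epsilon_2(c)$, as asserted.

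The step I expect to be the main obstacle is precisely this chain-level computation of $g_n([\L])$ in the disconnected setting: one has to control the moduli spaces of rigid disks with boundary on the possibly disconnected $2$-copy $\L\cup\L_f$, in particular those disks whose boundary runs between different components of $\L$, and verify that inserting the fundamental class produces precisely the cocycle $\epsilon_1+\epsilon_2$, and not merely some cocycle that vanishes exactly when $\epsilon_1+\epsilon_2$ does. This is also the place where the breakdown of the naive criterion becomes transparent: $\tau_0=0$ is equivalent to surjectivity of $\tau_n$, hence to $g_n\equiv 0$, which is strictly stronger than $g_n([\L])=0$ as soon as $\ho{n}{\L}$ has rank greater than one.
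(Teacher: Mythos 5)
Your proposal is correct in strategy but follows a genuinely different route from the paper. The paper never isolates your cohomological criterion; instead it connects the components of $\L$ by iterated Legendrian ambient $0$-surgeries, shows (Proposition \ref{S epsilon lemma}) that $\epsilon_1\sim\epsilon_2$ iff the pulled-back augmentations $\epsilon_1^S\sim\epsilon_2^S$, imports the connected-case criterion of \cite{geog} for $\tau^S_{+,0}$, transports it back to the link through the commutative diagram of Lemma \ref{diagram alpha} and the identification $\alpha(c)=c\bullet[\Lambda]$ of Lemma \ref{form of alpha}, and closes the argument with the adjunction $\langle\sigma_{-,0}([\Lambda]),q\rangle_-=\tau_{+,0}(q)\bullet[\Lambda]$ (Proposition \ref{signs switching}) and the evaluation statement of Proposition \ref{prop: almost non-degeneracy}. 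You instead prove the homotopy criterion directly: your telescoping argument that $\epsilon_1+\epsilon_2$ is a $\mu^1_{\epsilon_2,\epsilon_1}$-cocycle, and that a graded homotopy may be assumed supported on degree $-1$ generators because augmentations kill chords of nonzero degree, is correct and uses no connectedness; then exactness at $\ho{n}{\L}$ reduces everything to the single identity $\sigma_{+,0}([\Lambda])=[\epsilon_1+\epsilon_2]$, which yields both implications (and both orderings of augmentations, which the paper leaves slightly ambiguous between $\tau_{+,n}$ and $\tau_{-,n}$) at once. What your route buys is the elimination of the surgery machinery (Dimitroglou Rizell's description of $\CE{\Lambda_S}$, the surgery exact sequence) and of the connected-case theorem as a black box, together with a transparent explanation of why $\tau_0=0$ fails for links; what it costs is exactly the step you flag, the chain-level computation of the connecting map on $[\Lambda]$. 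That identity is true, and the mechanism you describe is the right one: with one maximum per component the only rigid contributions to $\sigma_{\pm,0}$ on a degree-$0$ chord $c$ come from the degenerate (formal) disks at the two endpoints of $c$, weighted by $\epsilon_1(c)$ and $\epsilon_2(c)$ respectively, which is precisely the analysis the paper itself performs for $\tau_{+,0}(q)=\epsilon_1(q)[*_{\L_j}]+\epsilon_2(q)[*_{\L_i}]$ in Lemma \ref{diagram alpha}, so the analytic input is no heavier than the paper's, merely located at $\sigma$ rather than $\tau$. If you prefer not to redo any moduli analysis, note that your key identity can be extracted from the paper's own ingredients: Lemmas \ref{diagram alpha} and \ref{form of alpha} together with Proposition \ref{signs switching} give $\langle\sigma_{-,0}([\Lambda]),[z]\rangle_-=(\epsilon_1+\epsilon_2)(z)$ for every cycle $z$ of degree-$0$ chords, and since $\Z_2$ is a field this pins down $\sigma_{-,0}([\Lambda])$ as the class of the cocycle $\epsilon_1+\epsilon_2$, after which your exactness argument goes through unchanged.
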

	
	There is one possible interpretation of our result in the context of exact Lagrangian fillings of Legendrian submanifolds due to Ekholm, Honda, Kálmán and Karlsson
	\begin{theorem*} [\cite{EHKmorphism},  \cite{Karlssonorientation}]
		An exact Lagrangian filling $L$ of a closed Legendrian submanifold $\Lambda$ induces an augmentation $\epsilon_{L}:(\mathcal{A}(\Lambda),\partial)\to (\Z_2,0)$. If $L_1$ and $L_2$ are two exact Lagrangian fillings of $\L$ that are isotopic through exact Lagrangian fillings, then $\epsilon_{L_1}\sim \epsilon_{L_2}$.
	\end{theorem*}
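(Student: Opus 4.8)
The plan is to build $\epsilon_L$ by a disk count and then to upgrade a filling isotopy to a chain homotopy through a parametrized moduli space. Fix a cylindrical almost complex structure $J$ on the symplectization $\R\times J^1(M)$ that is generic with respect to $L$, recalling that $L$ is cylindrical over $\Lambda$ at its positive end and has no negative end. For each Reeb chord $c$ of $\Lambda$, i.e.\ each generator of $\CE{\Lambda}$, let $\mathcal{M}(c;L)$ be the moduli space of $J$-holomorphic disks with boundary on $L$, one positive puncture asymptotic to $c$, and no negative punctures. I would set $\epsilon_L(c)$ to be the mod-$2$ count of the rigid (formal dimension zero) elements of $\mathcal{M}(c;L)$, extend multiplicatively over the free algebra, and put $\epsilon_L(1)=1$. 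A grading computation shows that a rigid disk with a single positive puncture forces $|c|=0$, so $\epsilon_L$ is a degree-$0$ algebra map, as an augmentation must be.

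To check that $\epsilon_L$ is an augmentation, that is $\epsilon_L\circ\partial=0$, I would study the one-dimensional moduli spaces $\mathcal{M}(c;L)$ for $|c|=1$. By SFT compactness these compactify to compact one-manifolds whose boundary consists of two-level buildings: a rigid disk in the symplectization with boundary on the trivial cylinder $\R\times\Lambda$, which reproduces a term of $\partial c$, glued along its negative punctures to rigid filling disks counted by $\epsilon_L$. Since a compact one-manifold has an even number of boundary points, summing these identities over $\Z_2$ gives exactly $\epsilon_L(\partial c)=0$. This is the filling specialization of the cobordism-map construction of Ekholm--Honda--Kálmán, with Karlsson's coherent orientation scheme guaranteeing that the counts are well defined (and trivially so over $\Z_2$).

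For invariance, let $\{L_s\}_{s\in[0,1]}$ be an isotopy through exact Lagrangian fillings of $\Lambda$, with $L_0$ and $L_1$ the two given fillings, all cylindrical over the fixed $\Lambda$ at the positive end. A Moser-type argument, using exactness of the family, lets one realize $\{L_s\}$ by an ambient Hamiltonian isotopy of the symplectization that is cylindrical near the positive end and hence fixes $\Lambda$; equivalently one works directly with a generic family $\{(L_s,J_s)\}$. I would then consider the parametrized moduli space $\bigsqcup_{s\in[0,1]}\mathcal{M}_0(c;L_s)$ of formal-dimension-zero disks, which for a generic family is a compact one-manifold with boundary. Define a degree $+1$ map $H\colon\CE{\Lambda}\to\Z_2$ on a generator $c$ by the mod-$2$ count of the isolated parameters $s$ at which a formal-dimension-$(-1)$ disk with positive puncture $c$ exists, and extend $H$ as an $(\epsilon_{L_0},\epsilon_{L_1})$-derivation. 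Reading off the boundary of the parametrized one-manifold, the ends $s=0$ and $s=1$ contribute $\epsilon_{L_0}(c)+\epsilon_{L_1}(c)$, while the interior breakings contribute the terms of $H(\partial c)$ together with the derivation corrections in which generators are capped alternately by $\epsilon_{L_0}$ and $\epsilon_{L_1}$; equating the total to zero mod $2$ yields $\epsilon_{L_1}-\epsilon_{L_0}=H\circ\partial$, the defining relation of a DGA homotopy (the term $\partial_{\Z_2}H$ vanishing since the target differential is zero).

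The main obstacle is the analytic bookkeeping underlying the last two steps: establishing transversality for the parametrized moduli spaces for a generic family $\{(L_s,J_s)\}$, and proving that the Gromov--SFT compactification of the one-dimensional (parametrized) moduli spaces has codimension-one boundary consisting of exactly the claimed two-level buildings, with no stray contributions from breaking at the cylindrical end, from multiply covered curves, or from boundary bubbling. Matching the combinatorics of these broken configurations to the derivation structure of $H$, so that products of generators and the alternation of $\epsilon_{L_0}$ and $\epsilon_{L_1}$ are accounted for correctly, is the delicate point; over $\Z_2$ the orientation subtleties handled by Karlsson disappear, and it is the compactness-and-gluing identification of the boundary strata that carries the argument.
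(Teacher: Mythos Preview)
The paper does not prove this statement: it is quoted as a known result from \cite{EHKmorphism} and \cite{Karlssonorientation} and used as a black box to deduce the subsequent corollary. There is therefore no ``paper's own proof'' to compare against.

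That said, your sketch is the standard outline of the Ekholm--Honda--K\'alm\'an construction and its parametrized version, and it is essentially correct as a proof plan. Defining $\epsilon_L$ by a rigid filling-disk count, verifying $\epsilon_L\circ\partial=0$ via the boundary of one-dimensional moduli spaces, and then extracting the $(\epsilon_{L_0},\epsilon_{L_1})$-antiderivation $H$ from a generic one-parameter family is exactly how the cited references proceed. Your identification of the analytic content (transversality for the parametrized spaces, SFT compactness identifying the codimension-one boundary strata, and the combinatorial matching with the derivation rule) is accurate; over $\Z_2$ the orientation issues handled in \cite{Karlssonorientation} are indeed vacuous. One small point of terminology: in the paper's conventions the chain-homotopy operator is called an $(\epsilon_1,\epsilon_2)$-\emph{antiderivation} rather than a derivation, but the Leibniz-type rule you wrote is the intended one.
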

	Therefore, we immediately obtain the following necessary condition.
	\begin{cor}
		If $L_1$ and $L_2$ are two exact Lagrangian fillings of $\L$ that are isotopic through exact Lagrangian fillings, then $[\Lambda]$ is an element of the image of $\tau_{-,n}$.
	\end{cor}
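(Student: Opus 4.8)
The plan is to obtain the corollary by concatenating the two results quoted immediately above it: the filling-to-augmentation theorem of Ekholm--Honda--Kálmán (with the orientation refinements of Karlsson) and Theorem~\ref{main thm}. First I would invoke that theorem: since $L_1$ and $L_2$ are exact Lagrangian fillings of $\L$ which are isotopic through exact Lagrangian fillings, the induced augmentations $\epsilon_{L_1},\epsilon_{L_2}\colon(\CE{\L},\pa)\to(\Z_2,0)$ are DGA-homotopic, $\epsilon_{L_1}\sim\epsilon_{L_2}$. At this point I should check that the relation $\sim$ appearing in that theorem is literally the DGA-homotopy relation used throughout the note, so that the hypothesis of Theorem~\ref{main thm} is genuinely met; this holds by definition, but it is the one bookkeeping point worth spelling out.

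Next I would apply the forward implication ($\Rightarrow$) of Theorem~\ref{main thm} to the pair of augmentations $(\epsilon_1,\epsilon_2)=(\epsilon_{L_1},\epsilon_{L_2})$. Since these are DGA-homotopic, the theorem yields that the fundamental class $[\Lambda]$ lies in the image of the map $\tau_n\colon\lbch{1}{2}{n}{\L}\to\ho{n}{\L}$ of the duality long exact sequence bilinearized by $(\epsilon_{L_1},\epsilon_{L_2})$; this is precisely the map denoted $\tau_{-,n}$ in the statement of the corollary. For Theorem~\ref{main thm} to be applicable I also need $\L$ to be closed with vanishing Maslov class: closedness is part of the hypothesis of the corollary, and the vanishing of the Maslov class is the standing assumption of the note under which $\CE{\L}$, its augmentations, the bilinearized homology and the map $\tau_n$ are all defined. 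Combining the two displayed facts finishes the argument.

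I expect the main --- and essentially the only --- obstacle to be purely organizational rather than mathematical: one must make sure the objects line up, i.e.\ that $\tau_{-,n}$ in the corollary is the same map as the $\tau_n$ of Theorem~\ref{main thm} for the specific augmentations coming from the fillings, and that the two notions of DGA-homotopy (the one in the Ekholm--Honda--Kálmán--Karlsson theorem and the relation $\sim$ of Theorem~\ref{main thm}) coincide. No new geometric input is required. It is perhaps worth remarking that the corollary is strictly stronger than the analogous statement one could extract in the connected setting, precisely because Theorem~\ref{main thm} does not assume $\L$ connected, so the conclusion applies to Legendrian links and to disconnected fillings as well.
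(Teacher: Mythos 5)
Your argument is exactly the paper's: the corollary is stated there as an immediate consequence of the Ekholm--Honda--K\'alm\'an--Karlsson theorem (isotopic exact fillings give DGA-homotopic augmentations) followed by the forward implication of Theorem~\ref{main thm}. The bookkeeping points you flag (matching the two notions of $\sim$ and identifying $\tau_{-,n}$ with the $\tau_n$ of the theorem for the augmentations $(\epsilon_{L_1},\epsilon_{L_2})$) are handled correctly.
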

	
	Nevetheless, the main application of our result will lie in the geography of bilinearized Legendrian contact homology for disconnected Legendrian submanifolds. In other words, we are asking about what polynomials can be attained as Poincaré polynomials of bilinearized Legendrian contact homology, that is $$ P_{\L,\epsilon_1,\epsilon_2}(t)=\sum_{k\in \Z} \dim_{\Z_2} LCH^{\epsilon_1,\epsilon_2}_k(\L)\,\, t^k, $$ and vice versa, which Legendrian submanifolds realize a particular admissible polynomial. This question was fully answered for linearized Legendrian contact homology by Bourgeois, Sabloff and Traynor in \cite{gfgeography} as it was observed in \cite{bil}. And because of the results of Bourgeois and Chantraine \cite{bil} it is enough to describe the geography when $\epsilon_1\not\sim \epsilon_2$. That was done in \cite{bil} by Bourgeois and Galant for the connected case. In this note, we extend the results to the disconnected case. More specifically, we define a version of bLCH-admissible polynomials for disconnected Legendrian submanifolds (links in dimension three) called lbLCH-admissible polynomials (see Definition \ref{def: admissible polyn}) and we prove:
	
	\begin{theorem}\label{geography1}
		Let $\L$ be a Legendrian submanifold of $J^1(M)$, $\dim M = n $, with vanishing Maslov class, that consists of $r$ $n$-dimensional components for any natural number $r$, and $\epsilon_1$, $\epsilon_2$ be two DGA-non-homotopic augmentations. Then $P_{\L,\epsilon_1,\epsilon_2}$ is a lbLCH-admissible polynomial.
	\end{theorem}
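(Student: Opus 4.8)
The plan is to combine the bilinearized Sabloff duality long exact sequence with the criterion of Theorem \ref{main thm}: the duality sequence produces the numerical shape of an admissible polynomial, and Theorem \ref{main thm} supplies the extra condition forced by non-homotopy. Recall (Ekholm--Etnyre--Sabloff \cite{dualityseq} in the linearized case, Bourgeois--Chantraine \cite{bil} in the bilinearized one) that for a closed Legendrian $\Lambda\subset J^1(M^n)$ with vanishing Maslov class and any ordered pair of augmentations there is a long exact sequence
\begin{equation*}
\cdots \longrightarrow LCH^{\epsilon_1,\epsilon_2}_{k}(\Lambda)\ \xrightarrow{\ \tau_k\ }\ H_k(\Lambda;\Z_2)\ \longrightarrow\ LCH_{\epsilon_2,\epsilon_1}^{\,n-k}(\Lambda)\ \longrightarrow\ LCH^{\epsilon_1,\epsilon_2}_{k-1}(\Lambda)\ \longrightarrow\ \cdots ,
\end{equation*}
where $\tau_k$ is the map to the Morse homology of $\Lambda$; its construction does not require $\Lambda$ to be connected, so it applies verbatim here with $H_*(\Lambda;\Z_2)=\bigoplus_{i=1}^{r}H_*(\Lambda_i;\Z_2)$. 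Since $\Z_2$ is a field and all groups in sight are finite-dimensional, $\dim LCH_{\epsilon_2,\epsilon_1}^{j}(\Lambda)=\dim LCH^{\epsilon_2,\epsilon_1}_{j}(\Lambda)$ for every $j$. A standard splitting of this exact sequence into short exact pieces $0\to Z_p\to W_p\to Z_{p-1}\to0$, together with passage to generating functions, will yield an identity of the form
\begin{equation*}
P_{\Lambda,\epsilon_1,\epsilon_2}(t)\ +\ t^{n}\,P_{\Lambda,\epsilon_2,\epsilon_1}(t^{-1})\ =\ \sum_{i=0}^{n} b_i\, t^{i}\ +\ (1+t)\,Z(t),
\end{equation*}
where $b_i=\dim_{\Z_2}H_i(\Lambda;\Z_2)$ and $Z(t)=\sum_k \dim_{\Z_2}(\ker\tau_k)\,t^{k}\in\Z_{\ge0}[t,t^{-1}]$.

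Next I will unpack the data on the right. Because $\Lambda$ is a disjoint union of $r$ closed $n$-manifolds we have $b_0=b_n=r$ (the mod-$2$ fundamental class always exists) and each $b_i$ is a sum of $r$ mod-$2$ Betti numbers of closed $n$-manifolds; together with the evident non-negativity of the coefficients of $P_{\Lambda,\epsilon_1,\epsilon_2}$, this accounts for the purely numerical part of Definition \ref{def: admissible polyn}. It then remains to produce the constraint imposed by non-homotopy. Since $\epsilon_1\not\sim\epsilon_2$, Theorem \ref{main thm} gives that the fundamental class $[\Lambda]\in H_n(\Lambda;\Z_2)\cong\Z_2^{\,r}$ is not in the image of $\tau_n$, so $\operatorname{coker}\tau_n\neq0$. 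Exactness of the duality sequence at $H_n(\Lambda;\Z_2)$ embeds $\operatorname{coker}\tau_n$ into $LCH_{\epsilon_2,\epsilon_1}^{\,0}(\Lambda)$, so $\dim LCH^{\epsilon_2,\epsilon_1}_{0}(\Lambda)\ge1$; since DGA-homotopy is symmetric in $\epsilon_1,\epsilon_2$, the same argument for the pair $(\epsilon_2,\epsilon_1)$ gives $\dim LCH^{\epsilon_1,\epsilon_2}_{0}(\Lambda)\ge1$. In terms of the polynomials, the constant terms of $P_{\Lambda,\epsilon_2,\epsilon_1}$ and of $P_{\Lambda,\epsilon_1,\epsilon_2}$ are then positive, which is the additional positivity condition in Definition \ref{def: admissible polyn}. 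Together with the previous observations this shows $P_{\Lambda,\epsilon_1,\epsilon_2}$ is lbLCH-admissible.

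The step I expect to be delicate is this last one: turning the qualitative assertion ``$[\Lambda]$ is not hit by $\tau_n$'' — which concerns one specific map and one specific class — into a condition phrased purely in Poincaré polynomials and matching Definition \ref{def: admissible polyn} exactly. This is precisely where the disconnected case departs from the connected one of Bourgeois and Galant: when $\Lambda$ is connected, $H_n(\Lambda;\Z_2)=\Z_2$, so missing $[\Lambda]$ forces $\tau_n=0$ and pushes all of $H_n$ onto the dual side; when $\Lambda$ has $r\ge2$ components only the line through $[\Lambda]$ is forced out, the other $r-1$ top classes may be hit, and the cokernel can be a proper subspace — so one must keep the fundamental-class contribution bookkept separately from the remaining top-degree classes, and must verify that the forced degree-$0$ generator on the dual side cannot be absorbed into the $(1+t)Z(t)$ term when the identity is solved for the admissibility data. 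A second, more routine point will be to record carefully that the Bourgeois--Chantraine duality sequence and the Ekholm--Etnyre--Sabloff fundamental-class map are available, with the stated properties, for disconnected $\Lambda$; this is implicit in the literature but should be made explicit here.
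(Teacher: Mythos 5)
There is a genuine gap, and it sits exactly at the step you flagged as delicate. Definition \ref{def: admissible polyn} does not ask that the constant term of $P_{\L,\epsilon_1,\epsilon_2}$ be positive; it asks that $\textbf{q}(0)\geq 1$, where $\textbf{q}(t)=\sum_k\dim_{\Z_2}\im\tau_{\pm,k}\,t^k$ is the part of $P$ carried by the \emph{image} of $\tau$ and $\textbf{p}$ is the kernel part. Your argument via $\operatorname{coker}\tau_{+,n}\hookrightarrow LCH^{0}_{\epsilon_2,\epsilon_1}(\L)$ only shows $\dim_{\Z_2}LCH_0^{\epsilon_2,\epsilon_1}(\L)\geq 1$, i.e.\ $P(0)\geq 1$; a nonzero degree-$0$ class may perfectly well lie in $\ker\tau_{-,0}$ (for instance one coming from the image of $LCH_{\epsilon_1,\epsilon_2}^{n-1}(\L)\to LCH^{\epsilon_2,\epsilon_1}_0(\L)$ in the exact sequence), so this does not give $\textbf{q}(0)\geq 1$. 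Nor can you repair it by re-choosing the decomposition $P=\textbf{q}+\textbf{p}$: moving a degree-$0$ unit from $\textbf{p}$ to $\textbf{q}$ changes $\textbf{p}(-1)$ by one and can destroy condition (ii). The paper closes this gap differently, by transferring the top-degree information to degree $0$ through the adjoint map: since $[\L]\notin\im\tau_{+,n}$ and $\dim_{\Z_2}H_n(\L)=r$, one has $\operatorname{rank}\tau_{+,n}\leq r-1$; by Proposition \ref{signs switching} together with the nondegeneracy of the intersection pairing and Proposition \ref{prop: almost non-degeneracy}, $\operatorname{rank}\sigma_{-,n}\leq\operatorname{rank}\tau_{+,n}\leq r-1<r=\dim_{\Z_2}H_0(\L)$; exactness of the negative duality sequence at $H_0(\L)$ then gives $\im\tau_{-,0}=\ker\sigma_{-,n}\neq 0$, i.e.\ $\textbf{q}^-(0)\geq 1$, and symmetrically $\textbf{q}^+(0)\geq 1$. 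That use of the $\tau/\sigma$ adjointness is the missing idea in your write-up.

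A second, smaller problem is condition (ii). Your identity $P_{\L,\epsilon_1,\epsilon_2}(t)+t^nP_{\L,\epsilon_2,\epsilon_1}(t^{-1})=\sum_i b_it^i+(1+t)Z(t)$ is correct (with $Z=\textbf{p}^+$), but evaluating at $t=-1$ annihilates the only term containing $\textbf{p}$, so Betti-number bookkeeping plus non-negativity cannot yield the statement that $\textbf{p}(-1)$ is even when $n$ is odd and vanishes when $n$ is even. This requires the duality pairing between $\ker\tau_{+,k}$ and $\ker\tau_{-,n-1-k}$, i.e.\ the argument of Proposition 4.2 in \cite{geog}, which the paper invokes verbatim since it is insensitive to connectedness; your proposal should do the same rather than subsume (ii) under the ``purely numerical part.''
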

	
	and to complete the geography also the other direction:
	
	\begin{theorem}\label{geography2}
		Let $M$ be a smooth manifold of dimension $n$. If $P\in \N_0[t,t^{-1}]$ is any lbLCH-admissible polynomial, then there is $\L$ a Legendrian submanifold of $J^1(M)$ whose connected components are connected Legendrian submanifolds and two DGA-non-homotopic augmentations $\epsilon_1,\epsilon_2$ of its Chekanov-Eliashberg algebra so that $P_{\L,\epsilon_1,\epsilon_2}=P$. 
	\end{theorem}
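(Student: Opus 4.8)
The plan is to produce $\Lambda$ as a \emph{split} disjoint union of connected Legendrians, one for each summand in the decomposition of $P$ supplied by Definition \ref{def: admissible polyn}, so that the Chekanov--Eliashberg algebra, its augmentations, the bilinearized homologies and the duality long exact sequence all decompose as direct sums over the components. Unwinding Definition \ref{def: admissible polyn}, an lbLCH-admissible polynomial $P$ comes (together with a component count $r$) with a presentation $P = \sum_{i=1}^{r} P_i$ in which each $P_i \in \N_0[t,t^{-1}]$ is realizable by a connected Legendrian in $J^1(\R^n)$ equipped with a pair of augmentations over $\Z_2$ --- bLCH-admissible in the sense of Bourgeois and Galant \cite{geog} when the pair is required to be DGA-non-homotopic, linearized-admissible in the sense of Bourgeois, Sabloff and Traynor \cite{gfgeography} otherwise --- and at least one index, say $i=1$, is of the former, genuinely bilinearized, type.

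First I would invoke the connected geography theorems: for each $i$ they yield a closed connected Legendrian $\Lambda_i' \subset J^1(\R^n)$ with vanishing Maslov class and augmentations $\epsilon^i_1, \epsilon^i_2$ of $\CE{\Lambda_i'}$ with $P_{\Lambda_i',\epsilon^i_1,\epsilon^i_2} = P_i$, the pair $(\epsilon^1_1,\epsilon^1_2)$ being DGA-non-homotopic. Next I transport the pieces into $J^1(M)$: choosing $r$ pairwise disjoint embedded open balls $U_1,\dots,U_r$ in $M$ and Darboux balls $B_i \subset T^*U_i \times \R \subset J^1(M)$, a contact scaling shrinks each $\Lambda_i'$ into $B_i$, and since the Chekanov--Eliashberg algebra together with its holomorphic-disk data is local (once the Legendrian is small enough the relevant disks are confined to a Darboux neighbourhood), this transport changes neither $\CE{\Lambda_i}$ nor its augmentations nor its bilinearized homologies. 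Set $\Lambda = \Lambda_1 \sqcup \dots \sqcup \Lambda_r$; it is closed with vanishing Maslov class, and because the $\Lambda_i$ project to disjoint subsets of $M$ while the Reeb flow of $dz-\eta$ is vertical, there are no mixed Reeb chords between distinct components. Hence $\CE{\Lambda}$ is the coproduct of the $\CE{\Lambda_i}$ with no mixed disks in its differential, augmentations of $\CE{\Lambda}$ are exactly tuples $(\epsilon^1,\dots,\epsilon^r)$ of component augmentations, and for $\epsilon_j = (\epsilon^1_j,\dots,\epsilon^r_j)$ one gets $LCH^{\epsilon_1,\epsilon_2}_k(\Lambda) = \bigoplus_{i=1}^{r} LCH^{\epsilon^i_1,\epsilon^i_2}_k(\Lambda_i)$ for every $k$, so that $P_{\Lambda,\epsilon_1,\epsilon_2} = \sum_{i=1}^r P_i = P$.

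It remains to check $\epsilon_1 \not\sim \epsilon_2$, which I would do via Theorem \ref{main thm}. The same absence of mixed chords makes the bilinearized duality long exact sequence of \cite{dualityseq, bil} for $\Lambda$ the direct sum of those of the $\Lambda_i$; in particular $H_n(\Lambda) = \bigoplus_i H_n(\Lambda_i)$ and $\tau_n = \bigoplus_i \tau^i_n$, so $\im \tau_n = \bigoplus_i \im \tau^i_n$. Applying Theorem \ref{main thm} to the connected Legendrian $\Lambda_1$, the relation $\epsilon^1_1 \not\sim \epsilon^1_2$ gives $[\Lambda_1] \notin \im \tau^1_n$; since the $\Lambda_1$-component of $[\Lambda]$ is $[\Lambda_1]$, this forces $[\Lambda] \notin \im \tau_n$, and Theorem \ref{main thm} applied to $\Lambda$ then yields $\epsilon_1 \not\sim \epsilon_2$, as required. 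The only real work, and where the argument must be made careful, is the bookkeeping behind this \emph{split} picture: verifying that the connected constructions of \cite{geog, gfgeography}, phrased in $J^1(\R^n)$, localize into an arbitrary $J^1(M)$ without altering any Floer-theoretic invariant, and that the Chekanov--Eliashberg algebra, the augmentation set, the bilinearized complexes, the maps $\tau_\bullet$ and the fundamental class all genuinely respect the resulting direct-sum decomposition.
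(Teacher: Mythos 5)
Your architecture is the paper's: realize $P$ by a split disjoint union of connected pieces placed over disjoint regions of $M$ (so there are no mixed Reeb chords, the Chekanov--Eliashberg algebra and all the bilinearized invariants decompose, and the Poincar\'e polynomials add), and extract non-homotopy of the glued augmentations from a single distinguished component. The localization and direct-sum bookkeeping that you single out as ``the only real work'' is indeed routine and is treated as such in the paper. The genuine gap sits one step earlier: Definition \ref{def: admissible polyn} does not ``come with'' a presentation $P=\sum_i P_i$ into summands realizable by connected Legendrians --- producing that presentation is the actual content of the proof, and you assert it rather than construct it. The decomposition cannot be arbitrary, because the component-wise invariants are constrained: a connected component on which the two restricted augmentations are non-homotopic is forced (by Theorem \ref{thm: criterBG} and duality) to have $\textbf{q}_i(0)=1$ and vanishing coefficient of $t^n$ in $\textbf{q}_i$, while a component on which they are homotopic is forced to have $\textbf{q}_i(0)=0$ and coefficient $1$ of $t^n$ in $\textbf{q}_i$. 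Hence the excess constant term $\textbf{q}(0)-1$ must be distributed among $\textbf{q}(0)-1$ further components, each carrying its own non-homotopic pair and realizing exactly the constant polynomial $1$, and the coefficient $q_n$ must be supplied by $q_n$ further components with homotopic (in fact equal) pairs, each realizing exactly $t^n$.

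The paper makes exactly these choices: it sets $\tilde{\textbf{q}}=\textbf{q}-\textbf{q}(0)-q_nt^n+1$ and $\tilde{\textbf{p}}=\textbf{p}$, checks this is bLCH-admissible in the connected sense so that \cite{geog} supplies a connected $\Psi_{\tilde{\textbf{q}},\tilde{\textbf{p}}}$ with a non-homotopic pair realizing $\tilde{\textbf{q}}+\tilde{\textbf{p}}$; it adds $q_n$ Legendrian lifts of the Whitney immersion, each realizing $t^n$ with a single augmentation; and it adds $\textbf{q}(0)-1$ copies of the surgered Hopf link $\L^{\prime}$ of Proposition \ref{geography}, each realizing the constant $1$ with the non-homotopic pair $(\widetilde{\epsilon_L},\widetilde{\epsilon_R})$. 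To close your gap you would need to exhibit this (or an equivalent) decomposition and cite the specific realizability facts for the constant $1$ and for $t^n$; ``unwinding the definition'' does not produce them. Two smaller remarks: your appeal to the geography of \cite{gfgeography} for the homotopic components is more than is needed (the Whitney sphere suffices), and your non-homotopy argument via Theorem \ref{main thm} is valid but roundabout --- since $\CE{\L}$ splits, a DGA-homotopy would restrict to one on the subalgebra generated by the chords of the distinguished component, contradicting non-homotopy there directly, which is how the paper argues.
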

	\subsection*{Acknowledgements} The results found in this short note were produced under the supervision of my Erasmus+ Intership supervisor Frédéric Bourgeois and my Master Thesis supervisor Roman Golovko, both of whom I am greatly indebted to.  Moreover, I would like to express
	gratitude to the Institut de Mathématique d'Orsay for the financial support and the Team of Topology and
	Dynamics for their hospitality and welcoming environment.
	\newpage
	\section{Background}
	\subsection{(Bi)linearization}  
	
	\begin{definition}
		Let $\epsilon_1,\epsilon_2$ be augmentations of $\mathcal{A}(\Lambda)$. A linear map $K:\mathcal{A}(\Lambda)\to \Z_2$ satisfying $K(ab)=\epsilon_1(a) K(b)+K(a)\epsilon_2(b)$ for all $a,b\in \mathcal{A}(\Lambda)$ is called a $(\epsilon_1,\epsilon_2)$-antiderivation.
		If it exists and $\epsilon_1-\epsilon_2=K\circ \partial$, then the augmentations are said to be DGA-homotopic, notation $\epsilon_1\sim\epsilon_2$. 
	\end{definition}

	Note that the relation $\sim$ above is an equivalence and that each augmentation is uniquely determined by its values on Reeb chords of $\Lambda$ and so for compact Legendrian $\Lambda$ we have a finite set$$\mathcal{E}(\Lambda)=\{[\epsilon]_\sim \, \,|\,\epsilon:(\CE{\Lambda},\partial)\to (\Z_2,0) \text{ is an augmentation of } \CE{\Lambda}\}.$$
	
	\begin{theorem}[Theorem 1.3. in \cite{bil}]\label{thm: invariance of set of augmentations}
		Let $\{\Lambda_t\}_{t\in [0,1]}$ is a Legendrian isotopy, then we have a bijection of $\mathcal{E}(\Lambda_0)$ and $\mathcal{E}(\Lambda_1)$.
	\end{theorem}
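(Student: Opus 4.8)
The plan is to deduce the statement from two ingredients: the Legendrian isotopy invariance of the Chekanov--Eliashberg algebra, and a short piece of formal homological algebra describing how precomposition with DGA morphisms interacts with the relation $\sim$. For the first ingredient I would use that the trace of a Legendrian isotopy $\{\Lambda_t\}_{t\in[0,1]}$ is an exact Lagrangian concordance in the symplectization of $J^1(M)$ with cylindrical ends over $\Lambda_0$ and $\Lambda_1$, and invoke functoriality of $\CE{\cdot}$ under exact Lagrangian cobordisms over $\Z_2$ (Ekholm--Honda--Kálmán): this produces unital DGA morphisms $\Phi\colon\CE{\Lambda_1}\to\CE{\Lambda_0}$ and, from the reversed isotopy, $\Psi\colon\CE{\Lambda_0}\to\CE{\Lambda_1}$, with $\Psi\Phi$ and $\Phi\Psi$ DGA-homotopic to the respective identities (after stabilizing by finitely many acyclic generators, if one prefers to argue instead with Chekanov's stable tame isomorphisms).

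Granting this, the argument is purely formal. First, precomposition along a DGA morphism $f\colon(\mathcal A,\partial_{\mathcal A})\to(\mathcal B,\partial_{\mathcal B})$ sends augmentations to augmentations, since $\epsilon\circ f\circ\partial_{\mathcal A}=\epsilon\circ\partial_{\mathcal B}\circ f=0$, and it descends to $\sim$: if $\epsilon_1-\epsilon_2=K\partial_{\mathcal B}$ for an $(\epsilon_1,\epsilon_2)$-antiderivation $K$, then $K\circ f$ is an $(\epsilon_1 f,\epsilon_2 f)$-antiderivation because $f$ is an algebra map, and $(\epsilon_1-\epsilon_2)f=K\partial_{\mathcal B}f=(Kf)\partial_{\mathcal A}$, so $\epsilon_1 f\sim\epsilon_2 f$. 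Hence $f$ induces $f^{\ast}\colon\mathcal E(\Lambda')\to\mathcal E(\Lambda)$ on DGA-homotopy classes of augmentations. Second, DGA-homotopic morphisms induce the same map: if $f-g=\partial_{\mathcal B}H+H\partial_{\mathcal A}$ with $H$ an $(f,g)$-derivation, then for any augmentation $\epsilon$ of $\mathcal B$ one gets $\epsilon f-\epsilon g=(\epsilon H)\partial_{\mathcal A}$, with $\epsilon H$ an $(\epsilon f,\epsilon g)$-antiderivation, so $\epsilon f\sim\epsilon g$. Combining these with functoriality $(gf)^{\ast}=f^{\ast}g^{\ast}$ and $\mathrm{id}^{\ast}=\mathrm{id}$, the induced maps $\Phi^{\ast}$ and $\Psi^{\ast}$ are mutually inverse bijections $\mathcal E(\Lambda_0)\cong\mathcal E(\Lambda_1)$. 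If the stabilization is genuinely needed, one inserts the elementary observation that the inclusion $\mathcal A\hookrightarrow S\mathcal A$ induces a bijection on $\mathcal E$: every augmentation of $\mathcal A$ extends (send the new exact pair of generators to $0$, respectively to any value allowed by the grading), any two extensions are DGA-homotopic via an antiderivation supported on the two new generators, and any homotopy on $S\mathcal A$ restricts to the sub-DGA $\mathcal A$.

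The main obstacle is the first ingredient: proving at the chain level that a Legendrian isotopy induces DGA morphisms which are quasi-inverse up to DGA-homotopy (and stabilization). This is the substantial analytic content — transversality and SFT-type compactness for the moduli spaces of rigid holomorphic disks with boundary on the (non-cylindrical, then cylindrical-at-ends) Lagrangian trace of the isotopy, together with the gluing arguments identifying the composites with the identity up to homotopy. Over $\Z_2$ the orientation bookkeeping disappears, but the degeneration analysis is the same as in the connected case, and in particular is insensitive to whether $\Lambda_0$ or $\Lambda_1$ is disconnected; everything downstream is the same manipulation of antiderivations used throughout this note.
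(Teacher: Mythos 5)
Your argument is correct, but note that this paper does not prove the statement at all: it is quoted verbatim from Bourgeois--Chantraine \cite{bil}, and your two ingredients (geometric functoriality of $\CE{\cdot}$ under the Lagrangian trace of the isotopy, plus the formal facts that precomposition with a DGA morphism preserves augmentations and descends to $\sim$, and that DGA-homotopic morphisms induce the same map on $\mathcal{E}$) are essentially the proof given in that reference. The antiderivation manipulations you carry out are valid over $\Z_2$ with the paper's definitions, and your handling of stabilizations is the standard one; the only genuinely nontrivial input, as you correctly flag, is the analytic statement that the composite cobordism maps are DGA-homotopic to the identity.
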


	\begin{theorem}[Theorem 1.4, \cite{bil}]
		Let $\Lambda$ be a compact generic Legendrian submanifold of $J^1(M)$ with vanishing Maslov number, then if $\epsilon_1,\epsilon_2$ are two DGA-homotopic augmentations of $\CE{\Lambda}$, then $LCH^{\epsilon_1}(\Lambda)\cong LCH^{\epsilon_2}(\Lambda)$.
	\end{theorem}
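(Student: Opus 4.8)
The plan is to deduce the statement from the corresponding, a priori stronger, invariance property of \emph{bilinearized} homology. Recall that by construction $LCH^{\epsilon}(\Lambda)$ is nothing but $LCH^{\epsilon,\epsilon}(\Lambda)$, so it suffices to show that $\epsilon_1\sim\epsilon_2$ forces $LCH^{\epsilon_1,\epsilon_1}(\Lambda)\cong LCH^{\epsilon_2,\epsilon_2}(\Lambda)$. I would obtain this by first proving the ``one-variable'' statement: if $\epsilon\sim\epsilon'$ and $\eta$ is any augmentation of $\CE{\Lambda}$, then $LCH^{\epsilon,\eta}(\Lambda)\cong LCH^{\epsilon',\eta}(\Lambda)$, and symmetrically $LCH^{\eta,\epsilon}(\Lambda)\cong LCH^{\eta,\epsilon'}(\Lambda)$. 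Granting this, one chains the two isomorphisms, $LCH^{\epsilon_1,\epsilon_1}(\Lambda)\cong LCH^{\epsilon_2,\epsilon_1}(\Lambda)\cong LCH^{\epsilon_2,\epsilon_2}(\Lambda)$, and is done.

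For the one-variable statement, let $C$ denote the $\Z_2$-vector space generated by the Reeb chords of $\Lambda$, on which the bilinearized differential $d^{\epsilon,\eta}$ is defined by expanding $\partial$ into words and, for each word in $\partial c$, summing over the choices of a distinguished retained letter, applying $\epsilon$ to all letters to its left and $\eta$ to all letters to its right. Let $K$ be an $(\epsilon,\epsilon')$-antiderivation realizing the homotopy, i.e.\ with $\epsilon+\epsilon'=K\circ\partial$ (working over $\Z_2$). The idea is to build from $K$, $\partial$, and the maps $\epsilon,\epsilon',\eta$ a degree-preserving map $G\colon C\to C$ --- of the shape ``identity plus terms in which a single letter, strictly to the left of the retained one, is hit by $K$ rather than by an augmentation'' --- and to check that $G$ intertwines $d^{\epsilon,\eta}$ and $d^{\epsilon',\eta}$. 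Because $\partial$ strictly decreases the action, the correction terms in $G$ are strictly action-decreasing, so $G$ is unipotent on the finite-dimensional $C$, hence an isomorphism; combined with the intertwining property this gives the desired chain isomorphism. The verification of the intertwining identity is a bookkeeping argument: one expands both $G\circ d^{\epsilon,\eta}$ and $d^{\epsilon',\eta}\circ G$, rewrites the $K$-terms using the iterated Leibniz rule $K(ab)=\epsilon(a)K(b)+K(a)\epsilon'(b)$, and organizes the resulting doubly-expanded words so that the extra terms are matched in pairs via $\partial\circ\partial=0$; it is the exact analogue, with $K$ in place of an isotopy datum, of the computation underlying Chekanov's stable-tame-isomorphism invariance of linearized homology.

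The main obstacle is precisely this chain-level verification: one must pin down the correct formula for $G$ (equivalently, decide exactly on which side of the retained generator the auxiliary $K$ may be inserted, and whether a genuine chain-homotopy term is also needed) and then match terms carefully. Everything else --- finiteness of the sums, invertibility of $G$, and the passage to homology --- is formal, resting only on the action filtration. Alternatively, and perhaps more conceptually, one can phrase the argument categorically: a DGA-homotopy between $\epsilon$ and $\epsilon'$ is exactly an isomorphism between them viewed as objects of the unital augmentation $A_\infty$-category of $\Lambda$, whose morphism complexes compute bilinearized homology, and isomorphic objects have quasi-isomorphic morphism complexes; this yields both one-variable statements, and hence the theorem, simultaneously.
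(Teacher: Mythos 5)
The paper offers no proof of this statement: it is quoted from \cite{bil}, and the only related reasoning in the text is Corollary \ref{cor: dga homot of augm and blch}, which performs exactly your chaining $LCH^{\epsilon_1,\epsilon_1}(\Lambda)\cong LCH^{\epsilon_2,\epsilon_1}(\Lambda)\cong LCH^{\epsilon_2,\epsilon_2}(\Lambda)=LCH^{\epsilon_2}(\Lambda)$ starting from the one-variable bilinearized invariance --- which is itself the next result the paper quotes from \cite{bil}. So your reduction is the same as the paper's, and the entire mathematical content of your proposal lives in the one-variable statement. Your sketch of that step is the right one, and the parts you call formal really are formal: since $K$ is a degree-$(+1)$ map into $\Z_2$ concentrated in degree $0$, it is supported on chords of degree $-1$, so your single-$K$-insertion term preserves degree; and since every letter of a word of $\partial c$ has action strictly smaller than that of $c$, the correction $N$ in $G=\mathrm{id}+N$ is nilpotent and $G$ is invertible.

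The gap is exactly where you flag it: you never fix the formula for $G$ (which segments of a word receive $\epsilon$ versus $\epsilon'$, and on which side of the retained letter the $K$-insertion is allowed), and you do not verify the intertwining identity $G\circ d^{\epsilon,\eta}=d^{\epsilon',\eta}\circ G$. That verification is not routine bookkeeping to be waved at --- it is the theorem. Equivalently, in your categorical reformulation, one must check that the element $e+\alpha_K$ of the \emph{unital} augmentation category determined by $K$ is closed, so that composition with it is a chain map by the $A_\infty$ relations; and the statement that DGA homotopy of augmentations coincides with isomorphism in that category is itself a nontrivial result (delicate already because the category of \cite{bil} lacks strict units), not a tautology one can invoke for free. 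As it stands, your proposal is a correct and well-organized plan that matches the argument of \cite{bil}, but its decisive step is deferred rather than carried out.
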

	
	Therefore, the cardinality of the set $\mathcal{E}(\Lambda)$ is a Legendrian isotopy invariant. However, for $\{\Lambda_t\}_{t\in [0,1]}$ a Legendrian isotopy and $f:\mathcal{E}(\Lambda_0)\to \mathcal{E}(\Lambda_1)$ be the bijection from Theorem \ref{thm: invariance of set of augmentations}, then it is true that $LCH^{\epsilon}(\Lambda_0)\simeq LCH^{f(\epsilon)}(\Lambda_1)$.
	
	\begin{theorem}[Theorem 1.2, \cite{bil}]\label{thm: set of linearized lch}
		Let $\Lambda$ be a compact generic Legendrian submanifold of $J^1(M)$. Consider the set
		\begin{equation*}
			\mathcal{H}(\Lambda)=\bigcup_{[\epsilon]_\sim \in \mathcal{E}(\Lambda)}\{ LCH^\epsilon(\Lambda)\}.
		\end{equation*}
		Let $\{\Lambda_t\}_{t\in [0,1]}$ is a Legendrian isotopy, then the sets $\mathcal{H}(\Lambda_0)$ and $\mathcal{H}(\Lambda_1)$ coincide.
	\end{theorem}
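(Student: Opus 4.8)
The plan is to deduce the asserted equality of the sets $\mathcal{H}(\Lambda_0)$ and $\mathcal{H}(\Lambda_1)$ from the behaviour of augmentations and linearized homology under Legendrian isotopy, reducing it to the compatibility of the invariance bijection on augmentations with the passage to linearized homology. Since reversing the parametrization of the isotopy $\{\Lambda_t\}$ exchanges the roles of $\Lambda_0$ and $\Lambda_1$, it suffices to prove the inclusion $\mathcal{H}(\Lambda_0)\subseteq\mathcal{H}(\Lambda_1)$. First I would record that $\mathcal{H}(\Lambda)$ is a well-defined set of isomorphism classes of graded $\Z_2$-vector spaces: by Theorem 1.4 of \cite{bil} the group $LCH^{\epsilon}(\Lambda)$ depends, up to isomorphism, only on the DGA-homotopy class $[\epsilon]_\sim\in\mathcal{E}(\Lambda)$, so the choice of representative in the union defining $\mathcal{H}(\Lambda)$ is immaterial. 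Hence it is enough to exhibit, for each $[\epsilon]_\sim\in\mathcal{E}(\Lambda_0)$, a class in $\mathcal{E}(\Lambda_1)$ whose linearized homology is isomorphic to $LCH^{\epsilon}(\Lambda_0)$; the natural candidate is $f([\epsilon]_\sim)$, where $f\colon\mathcal{E}(\Lambda_0)\to\mathcal{E}(\Lambda_1)$ is the bijection of Theorem \ref{thm: invariance of set of augmentations} (alternatively, one can invoke Chekanov's invariance of the collection of linearized homologies taken over all augmentations and then collapse to one representative per class using Theorem 1.4 of \cite{bil}).

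The substantive step is therefore the compatibility $LCH^{\epsilon}(\Lambda_0)\cong LCH^{f(\epsilon)}(\Lambda_1)$. I would prove it by tracing $f$ through the invariance machinery for Chekanov–Eliashberg algebras (\cite{ChekanovLCH}, \cite{EEScontacthomology}): a Legendrian isotopy decomposes into elementary isotopies, each of which induces on the DGAs either a tame isomorphism $\Phi\colon(\CE{\Lambda},\partial)\to(\CE{\Lambda'},\partial')$ or an algebraic stabilization that adjoins two generators $e,\hat e$ in consecutive degrees with, in normal form, $\partial e=\hat e$, $\partial\hat e=0$, and no change to the old differential; the bijection $f$ is assembled by pulling augmentations back along this zig-zag. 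For a tame-isomorphism step, $\Phi$ intertwines $\partial$ and $\partial'$, hence intertwines the $\epsilon'$-linearized differential with the $(\epsilon'\circ\Phi)$-linearized differential on the respective spaces of Reeb-chord generators, so it induces an isomorphism $LCH^{\epsilon'\circ\Phi}(\Lambda)\cong LCH^{\epsilon'}(\Lambda')$. For a stabilization step, any augmentation is forced to vanish on $e$ and $\hat e$; thus in the linearized complex they span an acyclic direct summand $\Z_2\langle e\rangle\xrightarrow{\ \sim\ }\Z_2\langle\hat e\rangle$ with no interaction with the rest, leaving the homology unchanged. Composing the steps yields the desired isomorphism.

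With the compatibility in hand the theorem follows: for every $[\epsilon]_\sim\in\mathcal{E}(\Lambda_0)$ we get $LCH^{\epsilon}(\Lambda_0)\cong LCH^{f(\epsilon)}(\Lambda_1)\in\mathcal{H}(\Lambda_1)$, so $\mathcal{H}(\Lambda_0)\subseteq\mathcal{H}(\Lambda_1)$, and applying the same argument to the reversed isotopy (equivalently, to $f^{-1}$) gives the opposite inclusion, hence $\mathcal{H}(\Lambda_0)=\mathcal{H}(\Lambda_1)$. I expect the main obstacle to be the stabilization step of the middle paragraph: one must verify that the tame change of coordinates bringing an elementary isotopy's stabilization into normal form is honestly tame, that it carries $\epsilon'$-linearization to $(\epsilon'\circ\Phi)$-linearization with the two-dimensional acyclic piece genuinely split off, and that all of this is uniform in the augmentation — together with the bookkeeping that the entire zig-zag, and the induced maps on linearized homology, is well defined only up to DGA-homotopy, which is precisely the content underlying Theorems \ref{thm: invariance of set of augmentations} and 1.4 of \cite{bil}.
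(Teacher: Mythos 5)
The paper does not prove this statement at all --- it is imported verbatim as Theorem~1.2 of \cite{bil} --- and your argument is precisely the standard invariance argument underlying that cited result (decompose the isotopy into moves inducing tame isomorphisms and stabilizations of the Chekanov--Eliashberg DGA, pull back augmentations along the zig-zag, and check that each step preserves linearized homology, the stabilization contributing an acyclic two-dimensional summand), so it is correct and consistent with the source. The one imprecision is that a tame isomorphism $\Phi$ does not itself intertwine the linearized differentials; one must pass to the linear part of the augmentation-conjugated morphism $\phi_{\epsilon'}\circ\Phi\circ\phi_{\epsilon'\circ\Phi}^{-1}$, which is invertible exactly because $\Phi$ is tame --- but you flag essentially this point yourself in your closing paragraph.
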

	
	Analogously to linearized Legendrian contact homology we have the following.
	\begin{theorem}[Theorem 1.2, \cite{bil}]
		Let $\Lambda$ be a compact generic Legendrian submanifold of $J^1(M)$ with vanishing Maslov number, then if $\epsilon_1,\epsilon_2,\epsilon$ are augmentations of $\CE{\Lambda}$, where $\epsilon_1\sim \epsilon_2$, then $LCH^{\epsilon_1,\epsilon}(\Lambda)\cong LCH^{\epsilon_2,\epsilon}(\Lambda)$ and $LCH^{\epsilon,\epsilon_1}(\Lambda)\cong LCH^{\epsilon,\epsilon_2}(\Lambda)$.
	\end{theorem}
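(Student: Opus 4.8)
The plan is to promote the homotopy datum provided by $\epsilon_1\sim\epsilon_2$ to an explicit invertible chain map between the two bilinearized complexes. Recall that $LCH^{\epsilon_1,\epsilon}(\L)$ is the homology of $(C_\bullet(\L),d_{\epsilon_1,\epsilon})$, where $C_\bullet(\L)$ is the $\Z_2$-vector space with basis the Reeb chords of $\L$, with the grading inherited from $\CE{\L}$, and, for a chord $c$ with $\pa c=\sum_{\mathbf a}\mathbf a$ expanded into words $\mathbf a=a_1\cdots a_{|\mathbf a|}$,
\[
d_{\epsilon_1,\epsilon}(c)=\sum_{\mathbf a}\ \sum_{j=1}^{|\mathbf a|}\epsilon_1(a_1)\cdots\epsilon_1(a_{j-1})\,a_j\,\epsilon(a_{j+1})\cdots\epsilon(a_{|\mathbf a|}),
\]
and similarly for $d_{\epsilon_2,\epsilon}$. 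Since $\epsilon_1\sim\epsilon_2$, the definition of DGA-homotopy supplies an $(\epsilon_1,\epsilon_2)$-antiderivation $K\colon\CE{\L}\to\Z_2$, which we may take to be supported on the degree $-1$ part of $\CE{\L}$ (it is then homogeneous of degree $+1$), together with the identity $\epsilon_1-\epsilon_2=K\circ\pa$.

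First I would define $\Phi\colon\bigl(C_\bullet(\L),d_{\epsilon_1,\epsilon}\bigr)\to\bigl(C_\bullet(\L),d_{\epsilon_2,\epsilon}\bigr)$ by setting $\Phi(c)$ to be $c$ plus the count of those words $\mathbf a$ in $\pa c$ together with a choice of one letter to feed into $K$ and one letter to keep as the output chord, the remaining letters being augmented by $\epsilon_1$, $\epsilon_2$ or $\epsilon$ according to their cyclic position relative to the two marked letters --- using the same decoration rule by which the secondary operation $\mu_2$ of the augmentation category is defined in \cite{bil}, so that $K$ plays the role of a chain-level morphism from $\epsilon_1$ to $\epsilon_2$ and $\Phi$ is composition with it. Because $K$ lives in degree $-1$, a short index count shows $\Phi$ is degree-preserving. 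Moreover every disk with positive puncture at $c$ has positive symplectic area, so every chord occurring in the correction term has strictly smaller action than $c$; as $\L$ is compact there are only finitely many Reeb chords, hence $\Phi=\operatorname{id}+(\text{action-decreasing})$ is unipotent and therefore a linear isomorphism, with inverse of the same shape --- alternatively the inverse is built from the $(\epsilon_2,\epsilon_1)$-antiderivation witnessing $\epsilon_2\sim\epsilon_1$.

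What remains, and what I expect to be the main obstacle, is to check that $\Phi$ intertwines the differentials, $\Phi\circ d_{\epsilon_1,\epsilon}=d_{\epsilon_2,\epsilon}\circ\Phi$. Expanding both composites and regrouping, the terms assemble into the count of the boundary of the one-dimensional moduli spaces of disks carrying one positive puncture, one negative puncture marked ``$K$'' and one marked ``output''. The codimension-one degenerations split into three families: a subdisk breaking off at the $K$-marked puncture contributes $K(\pa(\,\cdot\,))=(\epsilon_1-\epsilon_2)(\,\cdot\,)$, which is exactly the discrepancy between the $\epsilon_1$- and the $\epsilon_2$-weightings and so accounts for the difference between $d_{\epsilon_1,\epsilon}$ and $d_{\epsilon_2,\epsilon}$; a subdisk breaking off at the output-marked puncture reproduces the two composites $\Phi\circ d_{\epsilon_1,\epsilon}$ and $d_{\epsilon_2,\epsilon}\circ\Phi$; and every other degeneration cancels against a partner by $\pa^2=0$, the antiderivation property of $K$, and $\epsilon_i\circ\pa=0$. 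The delicate point is the bookkeeping: following which of the three augmentations decorates each unmarked puncture through every degeneration and verifying that the three families match as stated (there are no signs, everything being over $\Z_2$). The second assertion, $LCH^{\epsilon,\epsilon_1}(\L)\cong LCH^{\epsilon,\epsilon_2}(\L)$, follows from the mirror construction, placing the $K$-marked puncture on the other side of the output, or may be read off from the first by the symmetry interchanging the two inputs of the bilinearized complex. In brief, the argument is the chain-level shadow of the fact that $K$ represents an invertible morphism between $\epsilon_1$ and $\epsilon_2$ in the $A_\infty$-category of augmentations of \cite{bil}, so that DGA-homotopic augmentations are isomorphic objects and the functors $LCH^{-,\epsilon}(\L)$ and $LCH^{\epsilon,-}(\L)$ carry them to isomorphic groups.
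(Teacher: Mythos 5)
The paper quotes this theorem from Bourgeois--Chantraine \cite{bil} without giving a proof of its own, so there is no internal argument to compare against; your proposal reproduces, correctly, the argument of the cited source, namely that the homotopy operator $K$ is the chain-level shadow of an invertible morphism in the (unitalized) augmentation category and that composition with $\operatorname{id}+K$ gives a unipotent, hence invertible, chain map between the two bilinearized complexes. Your degree count, the action-filtration argument for invertibility, and the decomposition of the verification into three families of degenerations are all accurate, and the step you flag as the main obstacle does go through: it is a purely algebraic consequence of $\partial^2=0$, the Leibniz rule, the $(\epsilon_1,\epsilon_2)$-antiderivation property of $K$, the identity $K\circ\partial=\epsilon_1-\epsilon_2$, and $\epsilon_i\circ\partial=0$ (which is what kills the ``every other degeneration'' terms), with the mirror decoration pattern handling $LCH^{\epsilon,\epsilon_1}\cong LCH^{\epsilon,\epsilon_2}$ exactly as you say.
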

	In particular,
	\begin{cor}\label{cor: dga homot of augm and blch}
		Let $\Lambda$ be a compact generic Legendrian submanifold of $J^1(M)$ with vanishing Maslov number, then if $\epsilon_1,\epsilon_2$ are augmentations of $\CE{\Lambda}$, where $\epsilon_1\sim \epsilon_2$, then $LCH^{\epsilon_1,\epsilon_2}(\Lambda)\cong LCH^{\epsilon_2,\epsilon_2}(\Lambda)=LCH^{\epsilon_{2}}(\Lambda)$.
	\end{cor}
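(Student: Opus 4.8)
The statement follows at once from the preceding theorem together with an unwinding of definitions, so the plan is short.

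First I would apply the preceding theorem with the distinguished augmentation taken to be $\epsilon := \epsilon_2$. Since $\epsilon_1 \sim \epsilon_2$ by hypothesis, the first of the two isomorphisms it provides specializes to $LCH^{\epsilon_1,\epsilon_2}(\Lambda) \cong LCH^{\epsilon_2,\epsilon_2}(\Lambda)$.

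Second, I would check that the bilinearized complex associated with a single augmentation $\epsilon$ placed in both slots coincides, on the chain level, with the linearized complex $(\CE{\Lambda},\partial^{\epsilon})$. Indeed, a term of $\partial$ contributing a word $c_{i_1}\cdots c_{i_k}$ is redistributed in the bilinearized differential $\partial^{\epsilon_1,\epsilon_2}$ by evaluating $\epsilon_1$ on the chords lying to the left of a chosen surviving generator and $\epsilon_2$ on those lying to its right; when $\epsilon_1 = \epsilon_2 = \epsilon$ this is precisely the linearization rule, so the two differentials, their gradings, and hence their homologies agree. Taking $\epsilon = \epsilon_2$ gives $LCH^{\epsilon_2,\epsilon_2}(\Lambda) = LCH^{\epsilon_2}(\Lambda)$, and combining this with the isomorphism from the first step yields the corollary.

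I do not anticipate any genuine obstacle here; the only point needing a moment's care is matching the grading conventions of the bilinearized and linearized theories along the diagonal $\epsilon_1 = \epsilon_2$, which is immediate from the definitions recalled in \cite{bil}.
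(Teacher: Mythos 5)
Your argument is exactly the paper's: the corollary is stated with "In particular," after the theorem on invariance of bilinearized homology under DGA-homotopy in one slot, i.e.\ one applies that theorem with $\epsilon=\epsilon_2$ to get $LCH^{\epsilon_1,\epsilon_2}(\Lambda)\cong LCH^{\epsilon_2,\epsilon_2}(\Lambda)$, and the identification $LCH^{\epsilon_2,\epsilon_2}(\Lambda)=LCH^{\epsilon_2}(\Lambda)$ is the definitional fact that bilinearization with equal augmentations reduces to linearization. Your chain-level check of that last equality is a harmless elaboration of what the paper leaves implicit; the proof is correct.
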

	\begin{theorem}[Theorem 1.2, \cite{bil}]
		Let $\Lambda$ be a compact generic Legendrian submanifold of $J^1(M)$. Consider the set
		\begin{equation*}
			\mathcal{H}^b(\Lambda)=\bigcup_{([\epsilon_1]_\sim,[\epsilon_2]_\sim) \in \mathcal{E}(\Lambda)\times\mathcal{E}(\Lambda)}\{ LCH^{\epsilon_1,\epsilon_2}(\Lambda)\}.
		\end{equation*}
		Let $\{\Lambda_t\}_{t\in [0,1]}$ is a Legendrian isotopy, then the sets $\mathcal{H}^b(\Lambda_0)$ and $\mathcal{H}^b(\Lambda_1)$ coincide.
	\end{theorem}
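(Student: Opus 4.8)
The plan is to deduce this from the fact that $\mathcal{H}^b(\Lambda)$ depends only on the Chekanov--Eliashberg algebra $\CE{\Lambda}$ up to stable tame isomorphism, together with the standard fact (Chekanov, Ekholm--Etnyre--Sullivan; see \cite{ChekanovLCH, EEScontacthomology}) that a Legendrian isotopy $\{\Lambda_t\}_{t\in[0,1]}$ induces a stable tame isomorphism $\Phi$ between $\CE{\Lambda_0}$ and $\CE{\Lambda_1}$. The isomorphism $\Phi$ already induces, on DGA-homotopy classes of augmentations, the bijection $f\colon\mathcal{E}(\Lambda_0)\to\mathcal{E}(\Lambda_1)$ of Theorem \ref{thm: invariance of set of augmentations}. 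The extra ingredient needed is the chain-level enhancement of this for bilinearizations: for every pair $\epsilon_1,\epsilon_2$ of augmentations of $\CE{\Lambda_0}$ one should have $LCH^{\epsilon_1,\epsilon_2}(\Lambda_0)\cong LCH^{f(\epsilon_1),f(\epsilon_2)}(\Lambda_1)$, exactly mirroring the remark following that theorem for the linearized groups $LCH^{\epsilon}$. Granting this, the theorem follows by pure bookkeeping: an arbitrary member $LCH^{\epsilon_1,\epsilon_2}(\Lambda_0)$ of $\mathcal{H}^b(\Lambda_0)$ is isomorphic to $LCH^{f(\epsilon_1),f(\epsilon_2)}(\Lambda_1)\in\mathcal{H}^b(\Lambda_1)$, and the same argument applied to $f^{-1}$ gives the reverse containment, so the two sets coincide.

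To obtain the chain-level statement I would follow the proof of its linearized counterpart step by step. First, stabilizing $\CE{\Lambda}$ by a canceling pair of generators $e,e'$ with $\partial e'=e$ affects neither the bilinearized homology nor the set of DGA-homotopy classes of augmentations: any augmentation of the stabilization restricts to one of $\CE{\Lambda}$ and, up to DGA-homotopy, vanishes on $e$ and $e'$, after which the bilinearized complex of the stabilization becomes the direct sum of the bilinearized complex of $\CE{\Lambda}$ with the two-term acyclic complex spanned by $e$ and $e'$ (with $\partial e'=e$). This reduces the claim to the case of an honest tame isomorphism $\phi\colon\CE{\Lambda_1}\to\CE{\Lambda_0}$, for which $f$ is simply the map $[\epsilon]\mapsto[\epsilon\circ\phi]$. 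Here I would use the functoriality of bilinearized Legendrian contact homology: for augmentations $\epsilon_1,\epsilon_2$ of $\CE{\Lambda_0}$, the tame isomorphism $\phi$ induces a chain isomorphism between the bilinearized complex of $\CE{\Lambda_0}$ for the pair $(\epsilon_1,\epsilon_2)$ and that of $\CE{\Lambda_1}$ for the pair $(\epsilon_1\circ\phi,\epsilon_2\circ\phi)=(f(\epsilon_1),f(\epsilon_2))$, which gives the required isomorphism on homology.

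The main obstacle---and the only step with genuine content---is this functoriality assertion: that a tame DGA isomorphism $\phi$ carries the bilinearized differential $\partial^{\epsilon_1,\epsilon_2}$ to $\partial^{\epsilon_1\circ\phi,\epsilon_2\circ\phi}$ up to chain isomorphism. Pinning this down means unwinding the definition of $\partial^{\epsilon_1,\epsilon_2}$ and following how each word appearing in the differential transforms under $\phi$, while keeping track of the fact that the two augmentations decorate the two ends of every such word and therefore do not simply commute past $\phi$; the correct induced chain map on bilinearized complexes then has to be extracted (this requires slightly more bookkeeping than the linearized case, in which only a single augmentation appears). This is the bilinearized refinement of the invariance argument of Bourgeois and Chantraine in \cite{bil}; with it in hand, everything else in the proof is routine.
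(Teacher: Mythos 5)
This statement is quoted in the paper as Theorem~1.2 of \cite{bil} and is given no proof here, so there is nothing internal to compare your argument against; I can only assess it against the standard argument. Your outline is the right one and is essentially sound: reduce via the stable tame isomorphism of \cite{ChekanovLCH, EEScontacthomology} to (a) invariance under stabilization and (b) invariance under a tame isomorphism $\phi$, with the bijection $f$ on $\mathcal{E}$ given by $[\epsilon]\mapsto[\epsilon\circ\phi]$, and then conclude $\mathcal{H}^b(\Lambda_0)=\mathcal{H}^b(\Lambda_1)$ by bookkeeping. Two remarks. First, the step you flag as ``the only step with genuine content'' --- that bilinearization is functorial, i.e.\ that a DGA morphism $\phi$ induces a chain map $(C,\partial^{\epsilon_1\circ\phi,\epsilon_2\circ\phi})\to(C',\partial'^{\,\epsilon_1,\epsilon_2})$ by extracting the bilinear part of $\phi$, with functoriality under composition so that isomorphisms bilinearize to isomorphisms --- is indeed where all the work lives, and you describe it but do not carry it out; as a proof this remains a deferral to Section~3 of \cite{bil}, where exactly this is established. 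Second, there is a small piece of bookkeeping you elide: $\mathcal{H}^b$ is indexed by \emph{pairs of DGA-homotopy classes}, so you also need that the isomorphism type of $LCH^{\epsilon_1,\epsilon_2}$ depends only on $([\epsilon_1]_\sim,[\epsilon_2]_\sim)$; this is the companion statement quoted just above in the paper, and in \cite{bil} it is obtained from the same machinery by bilinearizing DGA \emph{homotopies} into chain homotopies. That last point also explains why Bourgeois--Chantraine do not actually reduce to tame isomorphisms: they bilinearize arbitrary DGA morphisms and DGA homotopies (the morphisms coming from the isotopy), which handles invariance and well-definedness on homotopy classes in one stroke, whereas your route needs the homotopy-invariance statement as a separate input.
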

	
	In view of Theorem \ref{thm: set of linearized lch} and Corollary \ref{cor: dga homot of augm and blch}, we see that $\mathcal{H}(\Lambda)\subset\mathcal{H}^b(\Lambda)$. And so bilinearized Legendrian contact homology is a stronger invariant of Legendrian isotopy which encodes the non-commutativity of the Chekanov-Eliashberg algebra that is lost in the process of linearization.
	
	In general, it is not an easy task to determine the DGA-homotopy class of a augmentation of $\CE{\Lambda}$ computationally and thus determining the cardinality of $\mathcal{E}(\Lambda)$ is an interesting problem. In the standard $3$-dimensional space the cardinality of $\mathcal{E}(\Lambda)$ was studied by Ng, Rutherford, Shende, Sivek in \cite{Ng_2017}.
	
	It is quite surprising that there is a criterion for DGA-homotopy of augmentations in any dimension. This first appeared in work of Bourgeois and Galant (see \cite{geog}). 
	
	\begin{theorem}[Proposition 3.3. in \cite{geog}]\label{thm: criterBG}
		Let $\Lambda$ be a connected compact generic Legendrian submanifold of $J^1(M)$ has dimension with vanishing Maslov number, where $M$ has dimension $n$. Then if $\epsilon_1,\epsilon_2$ are augmentations of $\CE{\Lambda}$ it holds that \begin{equation*}
			\dim_{\Z_2} LCH^{\epsilon_2,\epsilon_1}_n(\Lambda)-\dim_{\Z_2} LCH^{\epsilon_1,\epsilon_2}_{-1}(\Lambda)=\begin{cases}
				0, & \, \epsilon_1\not\sim \epsilon_2,\\
				1, & \, \epsilon_1  \sim \epsilon_2.
			\end{cases}
		\end{equation*}
		Equivalently,
		\begin{equation*}
			\epsilon_1\sim\epsilon_2 \iff \tau_0=0.
		\end{equation*}
	\end{theorem}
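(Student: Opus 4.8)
The plan is to run the Ekholm--Etnyre--Sabloff duality long exact sequence in its bilinearized (Bourgeois--Chantraine) form, reduce the displayed formula to the single equivalence $\tau_0=0\iff\epsilon_1\sim\epsilon_2$ by a dimension chase, characterize DGA-homotopy purely algebraically, and then bridge the two sides by a chain-level computation of $\tau_0$. Throughout I write $(C,\partial^{\epsilon_1,\epsilon_2})$ for the bilinearized complex generated by the Reeb chords of $\Lambda$, graded so that $\partial^{\epsilon_1,\epsilon_2}$ lowers degree by one and augmentations are supported in degree $0$.

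First I would run the dimension count. Near the bottom, the duality sequence reads
\[ LCH^{\epsilon_1,\epsilon_2}_0(\Lambda)\xrightarrow{\tau_0}H_0(\Lambda)\xrightarrow{g}LCH^n_{\epsilon_2,\epsilon_1}(\Lambda)\xrightarrow{h}LCH^{\epsilon_1,\epsilon_2}_{-1}(\Lambda)\to H_{-1}(\Lambda)=0, \]
with $\dim LCH^n_{\epsilon_2,\epsilon_1}(\Lambda)=\dim LCH^{\epsilon_2,\epsilon_1}_n(\Lambda)$ by duality over $\Z_2$. Since $\Lambda$ is connected, $H_0(\Lambda)=\Z_2$, so $\operatorname{rank}\tau_0\in\{0,1\}$ and $h$ is surjective. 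Chasing dimensions through the three exactness points gives $\dim LCH^{\epsilon_2,\epsilon_1}_n-\dim LCH^{\epsilon_1,\epsilon_2}_{-1}=1-\operatorname{rank}\tau_0$, which equals $1$ when $\tau_0=0$ and $0$ otherwise. Hence the displayed formula and the equivalence ``$\epsilon_1\sim\epsilon_2\iff\tau_0=0$'' are the same assertion, and the whole theorem reduces to proving $\tau_0=0\iff\epsilon_1\sim\epsilon_2$.

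Next I would settle the algebra of DGA-homotopy. Put $e:=\epsilon_1-\epsilon_2\in C_0^{*}$. Applying the telescoping identity $\epsilon_1(w)-\epsilon_2(w)=\sum_i\epsilon_1(\text{left})\,(\epsilon_1-\epsilon_2)(c_i)\,\epsilon_2(\text{right})$ to the words $w=c_1\cdots c_m$ appearing in $\partial b$, together with $\epsilon_1\circ\partial=\epsilon_2\circ\partial=0$, shows $e\circ\partial^{\epsilon_1,\epsilon_2}=0$ on $C_1$, so $e$ is a cocycle of the dual complex. The same telescoping, applied now to a degree-$0$ generator $a$ with $K$ an $(\epsilon_1,\epsilon_2)$-antiderivation supported on degree $-1$ chords, yields $K(\partial a)=\langle k,\partial^{\epsilon_1,\epsilon_2}a\rangle$ for $k\in C_{-1}^{*}$ the restriction of $K$; one checks the remaining defining relations of $K$ hold automatically for grading reasons. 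Thus $\epsilon_1\sim\epsilon_2$ holds exactly when $e=(\partial^{\epsilon_1,\epsilon_2})^{*}k$ for some $k$, i.e. (over the field $\Z_2$) when $[e]=0$ in the degree-$0$ cohomology of the dual complex, equivalently when the induced functional $\bar e\colon LCH^{\epsilon_1,\epsilon_2}_0(\Lambda)\to\Z_2$ vanishes.

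Finally comes the bridge, which I expect to be the main obstacle. It remains to identify the two functionals on $LCH^{\epsilon_1,\epsilon_2}_0(\Lambda)$: I claim $\tau_0=\bar e$ under $H_0(\Lambda)=\Z_2\cdot[\mathrm{pt}]$, that is $\tau_0([z])=(\epsilon_1-\epsilon_2)(z)\,[\mathrm{pt}]$ for every degree-$0$ cycle $z$. Granting this, the two previous paragraphs give $\tau_0=0\iff\bar e=0\iff\epsilon_1\sim\epsilon_2$, finishing the proof. Establishing $\tau_0=\bar e$ is the hard part: it requires the explicit chain model of the duality sequence, in which $\tau_0$ is realized by counting rigid disks with one positive puncture at $z$ whose boundary passes through the minimum of a Morse function on $\Lambda$, the remaining corners filled by $\epsilon_1$ on one side and $\epsilon_2$ on the other, and one must show this mixed count reassembles into precisely the telescoping expression $(\epsilon_1-\epsilon_2)(z)$. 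This is exactly where connectedness enters: for $\Lambda$ with several components $\tau_0$ records one scalar per component while $\epsilon_1-\epsilon_2$ is a single scalar, so the identification—and with it the criterion—breaks, which is the structural reason the fundamental-class reformulation through $\tau_n$ is forced in the disconnected setting. The easy half $\epsilon_1\sim\epsilon_2\Rightarrow\tau_0=0$ is the necessary condition already available from the duality sequence; the genuinely new content is the converse, supplied by the chain-level equality $\tau_0=\bar e$.
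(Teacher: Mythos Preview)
This theorem is not proved in the paper; it is quoted from \cite{geog} as background and then extended to the disconnected setting as Theorem~\ref{main thm}. Your three-step outline---the dimension chase in the duality sequence, the algebraic characterisation of DGA-homotopy as exactness of the cocycle $e=\epsilon_1-\epsilon_2$ in the bilinearised dual complex, and the bridge $\tau_0=\bar e$---is correct and matches the strategy the paper attributes to \cite{geog}: the proof of Lemma~\ref{diagram alpha} explicitly invokes ``the proof of Proposition~3.2 in \cite{geog}'' for the identity $\tau^S_{+,0}=\epsilon_1+\epsilon_2$ on a connected Legendrian, which is precisely your bridge.

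One refinement: you call the bridge ``the hard part'' and anticipate that a mixed disk count must ``reassemble into the telescoping expression,'' but the computation is in fact short and involves no nontrivial telescoping. For a degree-$0$ chord $q$ the rigidity constraint forces the underlying pseudoholomorphic disk to sit in a moduli space of virtual dimension $-1$, so only the trivial (formal) disk at $q$ contributes; its two boundary arcs land at the two endpoints of $q$ on $\Lambda$, and placing the Morse marked point on one arc or the other gives the contributions $\epsilon_1(q)[\mathrm{pt}]$ and $\epsilon_2(q)[\mathrm{pt}]$ respectively (this is exactly the computation carried out in the proof of Lemma~\ref{diagram alpha} for the disconnected version). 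With $\Lambda$ connected the two endpoints represent the same class in $H_0$, and over $\Z_2$ the sum collapses to $(\epsilon_1(q)+\epsilon_2(q))[\mathrm{pt}]$, completing the bridge directly. Your diagnosis of why connectedness is essential is spot on.
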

	
	This criterion is an important step in proving so-called geography result for bilinearized Legendrian contact homology.

	\subsection{Duality long exact sequence}\label{sec:dualityseq}
	
	We will denote by $\Lambda$ a disconnected Legendrian submanifold, that is $\Lambda=\coprod_{j=1}^r\Lambda_j$ for some $r\in \mathbb{N}$, where $\L_j$ are connected components. Moreover, we suppose that $\Lambda$ has vanishing Maslov number.
	
	Theorem \ref{thm: duality long exact sequence} below directly follows from Theorem 1.1 in \cite{dualityseq} that was originally proven for linearized Legendrian contact homology by Ekholm, Etnyre, Sabloff. Nevertheless, both the statement and the proof translates to the bilinearized setting as was observed in \cite{bil} .
	
	\begin{theorem}[Theorem 1.1 in \cite{dualityseq}]\label{thm: duality long exact sequence}
		Let $M$ be a smooth manifold of dimension $n$ and  $\Lambda\subset J^1(M)$ be a closed chord generic Legendrian submanifold such that we can completely displace the Lagrangian projection $\Pi_{T^*M}(\Lambda)$ from itself using a Hamiltonian isotopy, that is, if $\phi_t$ is a Hamiltonian isotopy of $T^*M$ for $t\in [0,1]$, then $\phi_1(\Pi_{T^*M}(\Lambda))\cap \Pi_{T^*M}(\Lambda)=\emptyset$. Moreover, we assume that we have $\epsilon_1,\epsilon_2$ two augmentations of $\CE{\Lambda}$ over $\Z_2$, then we obtain a long exact sequence
		\begin{equation}\label{eq:dualsequnce}
			\dots \to \lbcch{2}{1}{n-k-1}{\Lambda}\to \lbch{1}{2}{k}{\Lambda}\xrightarrow{\tau_k}\ho{k}{\Lambda}\xrightarrow{\sigma_{n-k}}\lbcch{2}{1}{n-k}{\Lambda}\to \cdots,
		\end{equation}  
		where $\ho{\bullet}{\Lambda}$ is the Morse (or equivalently singular) homology of $\Lambda$ with coefficients in $\Z_2$.
		This long exact sequence is to be called the duality long exact sequence of $\Lambda$. 
	\end{theorem}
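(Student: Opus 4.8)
The plan is to adapt the proof of Ekholm, Etnyre and Sabloff to the bilinearized setting, the only genuinely new point being the bookkeeping forced by the two distinct augmentations. First I would fix the chain-level models: the bilinearized complex $(C,\pa^{\ep{1},\ep{2}})$ with $C=\bigoplus_{c\in\reeb{\L}}\Z_2\langle c\rangle$, whose homology is $\lbch{1}{2}{\bullet}{\L}$, and its dual complex computing $\lbcch{2}{1}{\bullet}{\L}$. Here one uses, as in \cite{bil}, that an augmentation is a chain map to $(\Z_2,0)$, so that $\pa^{\ep{1},\ep{2}}$ squares to zero. A first observation is that the cohomology differential is, up to the standard grading conventions, the transpose of $\pa^{\ep{2},\ep{1}}$; this already explains why the two augmentations appear in the opposite order on the cohomology side of \eqref{eq:dualsequnce}.

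The core geometric input is a pairing
$$\langle\cdot,\cdot\rangle\colon C_k\otimes C_{n-1-k}\to\Z_2$$
counting rigid pseudoholomorphic disks in the symplectization with \emph{two} positive punctures, asymptotic to the two chosen Reeb chords (the rigidity condition forcing complementary degrees $|a|+|b|=n-1$), and whose remaining punctures are negative and capped by the augmentations. The two positive punctures cut the boundary into two arcs; the negative punctures on one arc are closed off by $\ep{1}$ and those on the other by $\ep{2}$, and this is the source of the asymmetry between the two sides of the sequence. This pairing induces the duality map $\theta\colon\lbcch{2}{1}{n-\bullet-1}{\L}\to\lbch{1}{2}{\bullet}{\L}$, and the whole of \eqref{eq:dualsequnce} is then the long exact sequence of the mapping cone of $\theta$, once that cone is identified with the Morse complex of $\L$. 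The structure maps of the cone are precisely $\tau_k$ and $\sigma_{n-k}$.

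The heart of the argument is the boundary analysis of the one-dimensional moduli spaces of two-positive-puncture disks. Their broken configurations split into two kinds: genuine disk breakings, which assemble into the near-chain-map identity $\langle\pa^{\ep{1},\ep{2}}a,b\rangle+\langle a,\pa^{\ep{2},\ep{1}}b\rangle=(\text{correction})$, and degenerations in which the two positive punctures collide and the disk becomes thin. The displaceability hypothesis on $\Pi_{T^*M}(\L)$ is exactly what is needed to run a cobordism/neck-stretching argument identifying the thin degenerations with gradient trajectories on $\L$; this is what guarantees that the third term of the sequence is $\ho{\bullet}{\L}$ itself rather than some Floer-theoretic self-intersection invariant. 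The correction term is thereby recognized as the Morse differential together with the maps into and out of $C_\bullet^{\mathrm{Morse}}(\L)\simeq\ho{\bullet}{\L}$, upgrading $\theta$ to a map of complexes with cone $C_\bullet^{\mathrm{Morse}}(\L)$ and defining $\tau$ and $\sigma$ on the chain level.

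The main obstacle I anticipate is verifying that the bilinearization does not disturb the EES degeneration count. Concretely, one must check that whenever a disk breaks, each broken piece carrying only negative punctures is closed off by a \emph{single} augmentation, consistently with the boundary arc on which it sits, so that the contributions reorganize into $\pa^{\ep{1},\ep{2}}$ and $\pa^{\ep{2},\ep{1}}$ with no cross terms. This is the purely algebraic fact, exploited throughout \cite{bil}, that augmentations kill the pure-negative-puncture configurations in exactly the required way. Granting this compatibility, the grading shift $k\mapsto n-k$, the exactness, and the identification of the third term with $\ho{\bullet}{\L}$ all follow as in \cite{dualityseq}.
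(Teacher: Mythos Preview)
The paper does not prove this theorem. It states it as Theorem~1.1 of \cite{dualityseq}, remarks that ``both the statement and the proof translate to the bilinearized setting as was observed in \cite{bil},'' and then explicitly says that a detailed description is beyond scope, giving only an informal account of the maps $\tau_k$ and $\sigma_k$ via generalized lifted disks. So there is no in-paper proof to compare against; the paper treats the result as a citation.

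Your sketch is a reasonable high-level outline of how the Ekholm--Etnyre--Sabloff argument runs and how the bilinearized bookkeeping from \cite{bil} enters, and your observation about why the augmentations appear in swapped order on the cohomology side is correct. One point of divergence from the presentation in \cite{dualityseq} (and echoed in this paper's informal description): the maps $\tau_k$ and $\sigma_k$ are constructed there via \emph{generalized lifted disks}, namely pairs $(u,\gamma)$ of a rigid holomorphic disk and a gradient flow line of a fixed Morse function on $\L$ attached at a boundary point, rather than directly as structure maps of a mapping cone built from two-positive-puncture disks. The two pictures are related (the two-copy/neck-stretching perspective you describe is what underlies the lifted-disk count), but if you were writing this out in full you would want to either match the lifted-disk language or explain the equivalence carefully. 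Your remark about the displaceability hypothesis is slightly off: in \cite{dualityseq} displaceability is used to guarantee acyclicity of the full two-copy complex (so that the mapping-cone long exact sequence has the Morse homology of $\L$ as its third term), not as a neck-stretching input per se.
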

	
	Since the detailed description of this sequence is beyond the scope of this paper, we present rather informal description below. We refer the reader to \cite{dualityseq} for details.
	
	The map $\tau_k$ above counts so called generalized lifted disks $(u,\gamma)$, where $u$ is a punctured pseudo-holomorphic as in Section 2.2.3 of \cite{dualityseq} and $\gamma$ is a negative gradient flow-line of some fixed perturbing Morse function $f:L\to \R$ which connects a critical point $q$ of $f$ with index $k$ to some generic point $p$ on the boundary of $u$ and the orientation of the flow is oriented towards $c$ (that is $f(p)>f(q)$).  
	
	\begin{figure}[!h]
		\centering
		\setlength{\unitlength}{0.1\textwidth}
		\begin{picture}(10,2.5)
			\put(3.5,0){\includegraphics[scale=1]{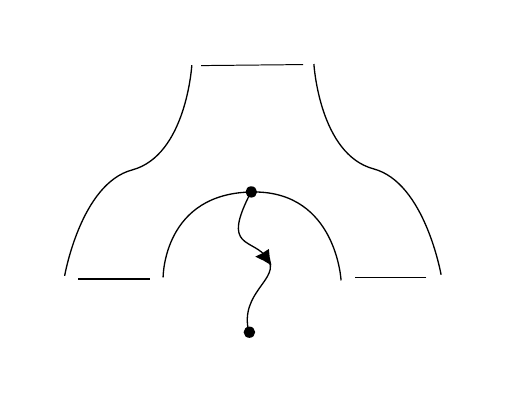}}
			\put(3.95,0.5){$\epsilon_1(c^-_{i_1})$}
			\put(5.89,0.5){$\epsilon_2(c^-_{i_2})$}
			\put(5.15,2.4){$c$}
			\put(5.00,1.6){$p$}
			\put(5.00,0.25){$q$}
			\put(5.00,0.9){$\gamma$}
		\end{picture}
		\caption{Example of a lifted generalized disk contributing to $\tau_{k}$.}\vspace{-3.0 pt}
		\label{lifted generalized disk}
	\end{figure}
	
	Denote the moduli space of suitable rigid  $(u,\gamma)$ where the boundary point is in between negative punctures $c_{i_{l-1}}$ and $c_{i_{l}}$ as $\mathcal{M}_{c^+,\textbf{c},q}$. Then we have
	\begin{equation*}
		\tau_k(c)=\sum_{\dim\mathcal{M}(c;\textbf{c}^-)/\R=\operatorname{Index}_f(q)-1} \#_2\mathcal{M}_{c,\textbf{c},q}\sum_{j=1}^{k}\epsilon_1(c_{i_1})\ldots\epsilon_1(c_{i_{l-1}})\, q \,\epsilon_2(c_{i_l})\ldots\epsilon_2(c_{i_k}).
	\end{equation*}
	
	On the other hand, the map $\sigma_{k}$ counts lifted generalized disks as above, however, the flow-line heads in the opposite direction (that is $f(p)<f(q)$).
	
	And so
	\begin{equation*}
		\sigma_{k}(q)=\hspace{-4.0 pt}\sum_{\dim\mathcal{M}(c;\textbf{c}^-)/\R=\operatorname{Index}_f(q)-1}\hspace{-4.0 pt} \#_2\mathcal{M}_{c,\textbf{c},q}\sum_{j=1}^{k}\epsilon_1(c_{i_1})\ldots\epsilon_1(c_{i_{l-1}})\, c \,\epsilon_2(c_{i_l})\ldots\epsilon_2(c_{i_k}).
	\end{equation*}
	
	The main difference between the linearized and bilinearized case is that the one has to pay attention to the ordering of augmentations in the duality formula from which the name of the sequence stems. More precisely, consider $\epsilon_1, \epsilon_2$ two augmentations of $\CE{\Lambda}$ of some $\Lambda=\coprod_{j=1}^r\Lambda_j$ as above. We have two duality sequences: fist for the ordering $(\epsilon_1,\epsilon_2)$, that we call positive:
	\begin{equation}
		\cdots\to \lbcch{2}{1}{n-k-1}{\Lambda}\to \lbch{1}{2}{k}{\Lambda}\xrightarrow{\tau_{+,k}}\ho{k}{\Lambda}\xrightarrow{\sigma_{+,n-k}}\lbcch{2}{1}{n-k}{\Lambda}\to \cdots
	\end{equation}
	second for the ordering $(\epsilon_2,\epsilon_1)$, that we call negative:
	\begin{equation}
		\cdots \to\lbcch{1}{2}{n-k-1}{\Lambda}\to \lbch{2}{1}{k}{\Lambda}\xrightarrow{\tau_{-,k}}\ho{k}{\Lambda}\xrightarrow{\sigma_{-,n-k}}\lbcch{1}{2}{n-k}{\Lambda}\to \cdots
	\end{equation}
	
	Recall that since all components $\Lambda_j$ for $j=1,\ldots,r$ of our Legendrian $\Lambda$ are closed we have an intersection pairing $\bullet:\ho{k}{\L_{j}}\otimes\ho{n-k}{\L_j}\to\Z_2$ for each $k\in \Z$. Now define the intersection pairing on $\Lambda$ for $k\in \Z$ and for $c=(c_1,\ldots,c_r)\in\ho{k}{\Lambda}=\ho{k}{\L_{1}}\oplus\ldots\oplus\ho{k}{\L_r}$ and $d=(d_1,\ldots,d_r)\in \ho{n-k}{\Lambda}=\ho{n-k}{\L_{1}}\oplus\ldots\oplus\ho{n-k}{\L_r}$ to be \begin{equation}
		c \bullet d =\sum_{j=1}^{r} c_j\bullet d_j.
	\end{equation}
	For precise definition of the pairing $\bullet$ on the components of the disconnected Legendrian submanifold $\Lambda$ see Section 3.3.3. of \cite{dualityseq}.
	
	Define a pairing $\langle\cdot,\cdot\rangle:C(\mathcal{R}(\Lambda))^*\otimes_{\Z_2}C(\mathcal{R}(\Lambda))\to \Z_2$ on generators $\{c_i: i\in \{1,\ldots, \#\mathcal{R}(\Lambda)\}\}$ as follows
	\begin{equation*}
		\langle c^*_{i},c_j\rangle=\begin{cases}
			1, & \text{ if } i=j,\\
			0, & \text{ otherwise.}
		\end{cases}
	\end{equation*}
	
	Note that this precisely corresponds to evaluation $\langle c^*_{i},c_j\rangle=c^*_j(c_i)$. In particular, if $|c^*_i|\neq|c_j|$, then $\langle c^*_i,c_j\rangle=0$.
	
	

	Consider $\partial^{\epsilon_1,\epsilon_2}$ the differential on $C(\mathcal{R}(\Lambda))$ and $\mu_{\epsilon_2,\epsilon_1}^1$ on $C(\mathcal{R}(\Lambda))^*$ the dual differential to $\partial^{\epsilon_1,\epsilon_2}$, that is $$\langle \mu_{\epsilon_2,\epsilon_1}^1(a),b\rangle=\langle a,\partial(b)\rangle,$$ for $a\in C(\mathcal{R}(\Lambda))$  and $b\in \mu_{\epsilon_2,\epsilon_1}^1$. Moreover, consider the tensor product $W=C(\mathcal{R}(\Lambda))^*\otimes_{\Z_2}C(\mathcal{R}(\Lambda))$ endowed with the standard differential $$d_W(c^*\otimes d)=\mu_{\epsilon_2,\epsilon_1}^1(c^*)\otimes d+c^*\otimes \partial^{\epsilon_2,\epsilon_1}(d)$$ for $c^*\in C(\mathcal{R}(\Lambda))^*$ and $d\in C(\mathcal{R}(\Lambda))$. Using the generalized Künneth formula we obtain that
	\begin{equation*}
		H_\bullet(W,d_W)\cong \bigoplus_{k+l=\bullet} H_k(C(\mathcal{R}(\Lambda))^*,\mu_{\epsilon_2,\epsilon_1}^1)\otimes_{\Z_2} H_l(C(\mathcal{R}(\Lambda)),\partial^{\epsilon_2,\epsilon_1}), 
	\end{equation*}
	where  torsion does not occur for we work over the field $\Z_2$.
	
	Define $F:W\to \Z_2$ by $F(a)=\sum_{i}\langle c_i,d_i\rangle$ for $a=\sum_i c_i\otimes_{\Z_2} d_i \in W$, then $F\circ d_W=0$ since the differentials are dual to each other. Therefore $F$ descends to homology of $(W,d_W)$ and so does the pairing.
	
	We will denote the pairing that we have just constructed as $$\langle\cdot,\cdot\rangle_+:LCH_{\epsilon_{2},\epsilon_1}(\Lambda)\otimes_{\Z_2} LCH^{\epsilon_{1},\epsilon_{2}}(\Lambda)\to \Z_2,$$ and for the opposite order of augmentations we will write
	$$\langle\cdot,\cdot\rangle_-:LCH_{\epsilon_{1},\epsilon_2}(\Lambda)\otimes_{\Z_2} LCH^{\epsilon_{2},\epsilon_{1}}(\Lambda)\to \Z_2.$$
	
	\begin{prop}\label{prop: almost non-degeneracy}
		Then for any non-zero class $[a]\in LCH_{\epsilon_{1},\epsilon_2}^{k}(\Lambda)$, there is a Reeb chord $c\in \mathcal{R}(\Lambda)$ so that $\langle [a],[c]\rangle_-\neq 0$, where the pairing
		$$\langle \cdot, \cdot\rangle_-:LCH^k_{\epsilon_1,\epsilon_2}(\Lambda)\otimes_{\Z_2}LCH_k^{\epsilon_2,\epsilon_1}(\Lambda)\to \Z_2$$ is as above. 
	\end{prop}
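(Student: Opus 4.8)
The plan is to show that the pairing $\langle\cdot,\cdot\rangle_-$, which we constructed as a descent to homology of the evaluation pairing $F$ on $W=C(\mathcal{R}(\Lambda))^*\otimes_{\Z_2}C(\mathcal{R}(\Lambda))$, is non-degenerate in the first variable. The key observation is that on the chain level the evaluation pairing $\langle c^*_i,c_j\rangle$ is already a perfect pairing between $C(\mathcal{R}(\Lambda))^*$ and $C(\mathcal{R}(\Lambda))$: the Reeb chords form a basis and $\{c_i^*\}$ is the dual basis. The question is whether perfectness survives passage to homology for the dual pair of complexes $(C(\mathcal{R}(\Lambda))^*,\mu^1_{\epsilon_2,\epsilon_1})$ and $(C(\mathcal{R}(\Lambda)),\partial^{\epsilon_1,\epsilon_2})$, where by construction $\mu^1_{\epsilon_2,\epsilon_1}$ is the $\Z_2$-linear dual of $\partial^{\epsilon_1,\epsilon_2}$.

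First I would reduce to a purely linear-algebra statement over the field $\Z_2$. Let $(C,\partial)$ be a finite-dimensional chain complex over $\Z_2$ and let $(C^*,\partial^*)$ be its dual complex, with the tautological evaluation pairing $\langle\cdot,\cdot\rangle:C^*\otimes C\to\Z_2$. Then this pairing descends to a pairing $H(C^*,\partial^*)\otimes H(C,\partial)\to\Z_2$, and I claim this descended pairing is non-degenerate: for every non-zero class $[a]\in H_k(C,\partial)$ there is $[\phi]\in H_k(C^*,\partial^*)$ with $\langle[\phi],[a]\rangle\neq 0$. To prove this, fix a cycle $a\in\ker\partial$ representing $[a]$ with $a\notin\im\partial$. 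Since $\{c_i\}$ is a basis, extend $a$ to a basis of the one-dimensional... rather, choose a linear functional $\phi_0\in C^*$ with $\phi_0(a)=1$ and $\phi_0$ vanishing on $\im\partial$ in degree $k$; this is possible precisely because $a\notin\im\partial$, so $a$ together with a basis of $\im\partial$ in degree $k$ is linearly independent, and we may prescribe $\phi_0$ on such a set and extend arbitrarily. Then $\phi_0$ vanishing on $\im\partial$ means $\langle\phi_0,\partial(\cdot)\rangle=0$, i.e. $\partial^*\phi_0=0$, so $\phi_0$ is a cycle in $(C^*,\partial^*)$, and $\langle[\phi_0],[a]\rangle=\phi_0(a)=1\neq 0$. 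This proves the claim.

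Second, I would identify $H(C^*,\mu^1_{\epsilon_2,\epsilon_1})$ with $LCH_{\epsilon_2,\epsilon_1}(\Lambda)$ and $H(C,\partial^{\epsilon_1,\epsilon_2})$ with $LCH^{\epsilon_1,\epsilon_2}(\Lambda)$, so that the descended pairing is exactly $\langle\cdot,\cdot\rangle_-$; this is immediate from the definitions recalled in Section \ref{sec:dualityseq}, once one checks that $\mu^1_{\epsilon_2,\epsilon_1}$ as defined there is literally the $\Z_2$-dual of $\partial^{\epsilon_1,\epsilon_2}$ and that the grading conventions match, so that a class in degree $k$ pairs with a dual class in degree $k$ (this uses $|c_i^*|=|c_i|$ and the remark that $\langle c_i^*,c_j\rangle=0$ unless the degrees agree). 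Finally, unwinding the claim: a non-zero $[a]\in LCH^k_{\epsilon_1,\epsilon_2}(\Lambda)$ produces a cycle $\phi_0\in C(\mathcal{R}(\Lambda))^*$ in the dual complex with $\langle[\phi_0],[a]\rangle_-\neq 0$; writing $\phi_0$ in the dual basis, $\phi_0=\sum_i \lambda_i c_i^*$, at least one $\lambda_i$ is non-zero and, since over $\Z_2$ the functional $c_i^*$ is evaluation against the Reeb chord $c_i$, we may take $c=c_i$ for a suitable $i$ appearing in $\phi_0$ with $\langle[c_i^*],[a]\rangle_-\neq 0$ — or more directly, note that by bilinearity $\langle[\phi_0],[a]\rangle_-=\sum_i\lambda_i\langle[c_i],[a]\rangle_-$, so some individual term $\langle[c_i],[a]\rangle_-$ must be non-zero. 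That $c_i$ is the desired Reeb chord.

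The main obstacle is bookkeeping rather than conceptual: one must be careful that the homology of the dual complex $(C(\mathcal{R}(\Lambda))^*,\mu^1_{\epsilon_2,\epsilon_1})$ is the \emph{cohomology} $LCH_{\epsilon_2,\epsilon_1}(\Lambda)$ (lower index in the paper's notation for the dual object) and that the degree at which a cocycle $\phi_0$ lives matches the degree $k$ of $[a]$, so that the pairing statement is about $LCH^k_{\epsilon_1,\epsilon_2}(\Lambda)\otimes LCH_k^{\epsilon_2,\epsilon_1}(\Lambda)$ as stated. There is also a small subtlety in the disconnected case: the Reeb chord complex $C(\mathcal{R}(\Lambda))$ decomposes as a direct sum over ordered pairs of components $(\Lambda_i,\Lambda_j)$ of mixed and pure chords, but since the linear-algebra argument above is insensitive to any such decomposition, disconnectedness causes no trouble here — it will only matter later when comparing with the duality sequence and the intersection pairing on $H_\bullet(\Lambda)$.
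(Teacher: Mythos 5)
Your underlying idea is the same as the paper's: over the field $\Z_2$ the evaluation pairing between a finite-dimensional complex and its dual descends to a perfect pairing on homology, so a non-zero class must pair non-trivially with some element of the dual basis. The paper does this directly on the chain level: it expands a representative of $[a]$ in the dual basis as $\sum_{i=1}^m c_i^*$, notes $m>0$ since $[a]\neq 0$, and tests against the chords $c_i$. Your detour through the general linear-algebra lemma is legitimate and, if anything, more careful than the paper's two-line computation; your identification of the descended pairing with $\langle\cdot,\cdot\rangle_-$ is also fine.

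There is, however, a direction error in your final step that must be repaired. The lemma you actually prove is non-degeneracy in the $H(C,\partial)$-slot: given a non-zero class in the homology of the chord complex you produce a cocycle $\phi_0$ in the dual complex. But the proposition starts from a non-zero class $[a]\in LCH^k_{\epsilon_1,\epsilon_2}(\Lambda)$, which in the paper's conventions lives in the homology of the \emph{dual} complex $(C(\mathcal{R}(\Lambda))^*,\mu^1_{\epsilon_2,\epsilon_1})$ and occupies the \emph{first} slot of $\langle\cdot,\cdot\rangle_-$; the witness you must produce lies in the homology of $C(\mathcal{R}(\Lambda))$ itself and is supposed to be (represented by) a Reeb chord. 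Your unwinding paragraph instead writes $\langle[\phi_0],[a]\rangle_-$ with $[a]$ in the second slot and the witness $\phi_0$ in $C(\mathcal{R}(\Lambda))^*$, which does not typecheck against the pairing as defined. The fix is short: apply your lemma with the roles of the two complexes exchanged, i.e.\ to $(C(\mathcal{R}(\Lambda))^*,\mu^1_{\epsilon_2,\epsilon_1})$ and its dual, and use the canonical isomorphism of complexes $C^{**}\cong C$; the witness is then a cycle $z=\sum_i c_i$ in the chord complex, and $\Z_2$-bilinearity extracts a single chord $c_i$ with $\langle a,c_i\rangle\neq 0$. (Both your argument and the paper's then share a residual looseness: that single chord need not itself be a cycle of $\partial^{\epsilon_2,\epsilon_1}$, so the honest conclusion is the chain-level statement $\langle a,c\rangle\neq 0$ rather than one about a homology class $[c]$; since the paper's own proof has the same feature, I do not count this against you.)
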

	\begin{proof}
		Let $a=\sum_{i=1}^m c^*_i+db$ be a representative of the class $[a]$, where $c_i\in \mathcal{R}(\Lambda)$ are all distinct, non-exact, and $|c^*_i|=k$, and moreover, $b\in C_{k+1}^*(\Lambda)$. Now because the class $[a]$ is non-zero then $m>0$. If we had that
		$\langle [a],[c]\rangle_-=0$ for all $c\in \mathcal{R}(\Lambda)$ of grading $k$, then \begin{align*}
			0&=\langle [a], [c] \rangle_-=\langle a, c \rangle=\langle \sum_{i=1}^m c^*_i+db, c\rangle=\langle \sum_{i=1}^m c^*_i , c \rangle=\sum_{i=1}^{m}c^*_i(c),
		\end{align*}
		therefore, $m=0$ which is a contradiction.
	\end{proof}
	
	We have an analogue of Proposition 3.9  in \cite{dualityseq}.

	\begin{prop}\label{signs switching}
		The pairs of maps $\tau_{+,k}$ and $\sigma_{-,k}$, and $\tau_{-,k}$ and $\sigma_{+,k}$ are adjoint in the following sense: 
		
		Let us have $c\in \ho{k}{\Lambda}$ and a chord $q$ of grading $n-k$
		\begin{align*}
			\langle\sigma_{-,n-k}(c),[q]\rangle_-&= c \bullet \tau_{+,n-k}([q]),\\
			\langle\sigma_{+,n-k}(c),[q]\rangle_+&= c \bullet \tau_{-,n-k}([q]),
		\end{align*}
		
		where $\bullet: \ho{k}{\Lambda}\otimes\ho{n-k}{L}\to \Z_2$ is the intersection pairing, and pairings
		\begin{align*}
			\langle\cdot,\cdot \rangle_-&:\lbcch{2}{1}{n-k}{\Lambda}\otimes_{\Z_2} \lbch{2}{1}{n-k}{\Lambda}\to \Z_2,\\
			\langle \cdot,\cdot\rangle_+&:\lbcch{1}{2}{n-k}{\Lambda}\otimes_{\Z_2}\lbch{1}{2}{n-k}{\Lambda}\to \Z_2
		\end{align*} are as above.

	\end{prop}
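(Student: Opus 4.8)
The plan is to replay the proof of Proposition~3.9 in \cite{dualityseq}, adapted to the bilinearized and disconnected setting. Over $\Z_2$ all signs are trivial, so the only genuinely new point is keeping track of the two cyclic orders in which $\ep{1},\ep{2}$ are inserted at the negative punctures of the disks; this is exactly what produces the ``switch'' $\sigma_{\mp}\leftrightarrow\tau_{\pm}$ in the statement. I would treat the first identity in detail; the second follows by exchanging $\ep{1}$ and $\ep{2}$ throughout.

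First I would pass to the chain level. The pairings $\langle\cdot,\cdot\rangle_{\pm}$ are induced by the evaluation pairing of $C(\reeb{\L})^*$ against $C(\reeb{\L})$, and the intersection pairing on $\ho{\bullet}{\L}=\bigoplus_{j=1}^{r}\ho{\bullet}{\L_j}$ is componentwise (Section~3.3.3 of \cite{dualityseq}); since a negative gradient trajectory of the perturbing Morse function $f$ stays in a single component $\L_j$, the chord-to-critical-point matrix entries defining $\tau$ and $\sigma$ also decompose according to which component the attached trajectory lives in. Hence it suffices to prove the identity at the chain level, one component at a time. Fixing a component and a Morse cycle $\zeta_c$ representing $c$, the left-hand side becomes the matrix entry of the chain map $\sigma_{-,n-k}$ between $\zeta_c$ and the chord $q$: the $\Z_2$-count, weighted by $\#_2\mathcal{M}$ and by a product $\ep{2}(\cdots)\,\ep{1}(\cdots)$ over the negative punctures lying before resp.\ after the boundary marked point, of the rigid configurations $(u,\gamma)$ with $u$ a disk with positive puncture $q$ and $\gamma$ a gradient trajectory joining a critical point occurring in $\zeta_c$ to a marked point of $\partial u$, oriented in the $\sigma$-direction. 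The right-hand side unwinds as the Morse intersection number of $\zeta_c$ with the cycle $\tau_{+,n-k}(q)$, whose generators come from rigid configurations of the same shape but with $\gamma$ oriented in the $\tau$-direction and weighted by $\ep{1}(\cdots)\,\ep{2}(\cdots)$.

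The heart of the matter, carried out as in \cite{dualityseq}, is then to identify these two $\Z_2$-counts, either by the standard cobordism argument (a compact $1$-dimensional moduli space of configurations — a disk with an un-broken gradient trajectory emanating from a boundary marked point — whose boundary points are enumerated by the two sides, so that their sum vanishes mod~$2$) or equivalently by a weight-preserving bijection of rigid configurations. Passing between the two sides reverses the gradient flow direction — precisely the difference between the $\sigma$- and $\tau$-type trajectories — which implements the Poincaré-duality identification turning the intersection pairing into $\langle\cdot,\cdot\rangle$, and simultaneously reverses the cyclic position of the marked point relative to the negative punctures (the order reversal accompanying the passage to the dual complex $C(\reeb{\L})^*$ in which $\sigma$ takes values), so that the weight $\ep{2}(\cdots)\ep{1}(\cdots)$ of a $\sigma_{-}$-configuration equals the weight $\ep{1}(\cdots)\ep{2}(\cdots)$ of the matched $\tau_{+}$-configuration; this is why $\sigma_{-}$ is adjoint to $\tau_{+}$ (and $\sigma_{+}$ to $\tau_{-}$), not to its own order-mate. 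The step I expect to be the main obstacle is the moduli-theoretic bookkeeping: arranging simultaneous transversality for the disks-with-trajectory moduli spaces, the Morse complexes of $f$ and $-f$, and the intersection-pairing data, and ruling out spurious boundary strata (disk breaking, trajectory breaking through an intermediate critical point) in the $1$-dimensional space. All of this is done in \cite{dualityseq} in the linearized, oriented case; over $\Z_2$ the signs disappear and the sole modification is to weight each disk by the two ordered products of $\ep{1},\ep{2}$ over its negative punctures, which is the bilinearization already observed in \cite{bil}. The disconnectedness of $\L$ introduces nothing new here, since every term in $\tau$ and $\sigma$ is supported on a single component and $\bullet$ was defined to respect that decomposition; summing over components yields the stated identities.
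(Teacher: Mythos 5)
Your proposal is correct and takes essentially the same route as the paper: the paper also reduces to a weight-preserving bijection of rigid lifted generalized disks (via Theorem~3.6 of the duality paper), obtained by replacing the perturbing Morse function $f$ by $-f$, which reverses the gradient trajectory and hence the position of the Morse puncture relative to the negative punctures, so that the order of the augmentations is exchanged and $\sigma_{-}$ pairs with $\tau_{+}$. Your discussion of transversality and of the componentwise decomposition is more explicit than the paper's, but the underlying argument is identical.
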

	\begin{proof}
		Let us prove the first equation the other case is analogous.
		
		The right handed side counts holomorphic disks with $q$ mixed positive puncture and $c$ negative Morse puncture and with possibly other augmented negative punctures between $q$ and $c$ with $\epsilon_1$ and between $c$ and $q$ with $\epsilon_2$. Now the bijective correspondence from Theorem 3.6 form \cite{dualityseq} implies that this disks corresponds to the lifted generalized disk with $\gamma$ a negative gradient flow line of the perturbing function $f$ ending at $c$ and connecting it to the boundary of the disk.

		To pass to the right side that is from homology to cohomology we change the sign of the perturbing function, that is $f$ is $-f$ on the left side. Now the orientation of $\gamma$ is reversed and so $c$ is a positive Morse puncture and $q$ is a negative mixed puncture. Since the order of augmentations is reversed , the disk contributes to $\sigma_{-,n-k}$.
	\end{proof}
	
	\begin{figure}[!h]
		\centering
		\setlength{\unitlength}{0.1\textwidth}
		\begin{picture}(10,2.5)
			\put(1.5,0){\includegraphics[scale=1]{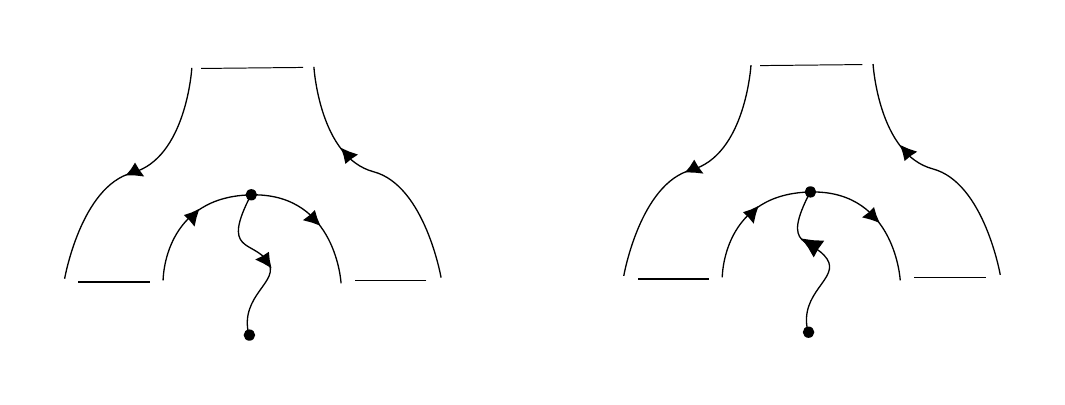}}
			\put(2.0,0.5){$\epsilon_1(b_1)$}
			\put(3.89,0.5){$\epsilon_2(b_2)$}
			\put(5.83,0.5){$\epsilon_1(b_1)$}
			\put(7.72,0.5){$\epsilon_2(b_2)$}
			\put(3.12,2.4){$q$}
			\put(6.95,2.4){$q$}
			\put(3.00,0.25){$c$}
			\put(6.83,0.25){$c$}
			\put(3.00,0.9){$\gamma$}
			\put(6.83,0.9){$\gamma$}
			\put(4.86,1.8){$\longleftrightarrow$}
		\end{picture}
		\caption{Effect of passing from homology to cohomology on a generalized lifted disk.}
		\label{change of sign disk}
	\end{figure}

	\subsection{The map $\tau_n$}
	
	Let us focus on the $n$-th level of the duality long exact sequence. Consider a Reeb chord $a$ with grading $n$, than the differential $\partial$ counts $(u,\gamma)$ lifted generalized disks where $u$ is a pseudo-holomorphic curve and $\gamma$ is the gradient flow line from a generic point of the boundary to $m_j$ a maximum of the Morse function $f$ on $\L_j$ the corresponding component of $\Lambda$, this in particular implies that the beginning point of $\gamma$ on the boundary must be the maximum $m_j$. For $l\in \{1,\ldots,m+1\}$ denote this moduli space by $$\mathcal{M}_{a;\textbf{b},\textbf{p}}(a;b_1,\ldots,b_{l-1},p_l,b_{l},\ldots,b_m),$$ where $p_l$ lies on the boundary component of the punctured disk which was mapped to $\L_{j}$ where $j=j_{l}$ assuming $b_{l}$ is a Reeb chord from the connected component $\Lambda_{i_l}$ to the connected component $\Lambda_{j_l}$ of the disconnected Legendrian submanifold $L$, for all $l\in \{1,\ldots,m\}$ if $l-1=m$ then $j=i_m$. We denote by $[\L_j]$ the homology class which $m_j$ represents. And so by the dimension formula (\cite{dualityseq}, Section 3.3.1)
	$$ 0=\dim(u,\gamma)=\dim \M{a}{b_1}{b_m}+1-\operatorname{Index}_f(p_l)=|a|-|\textbf{b}|-1+1-n$$
	and the fact that $|a|=n$ we get that $|\textbf{b}|=0$.

	\begin{figure}[!h]
		\centering
		\setlength{\unitlength}{0.1\textwidth}
		\begin{picture}(10,2.3)
			\put(2.1,0){\includegraphics[scale=0.5]{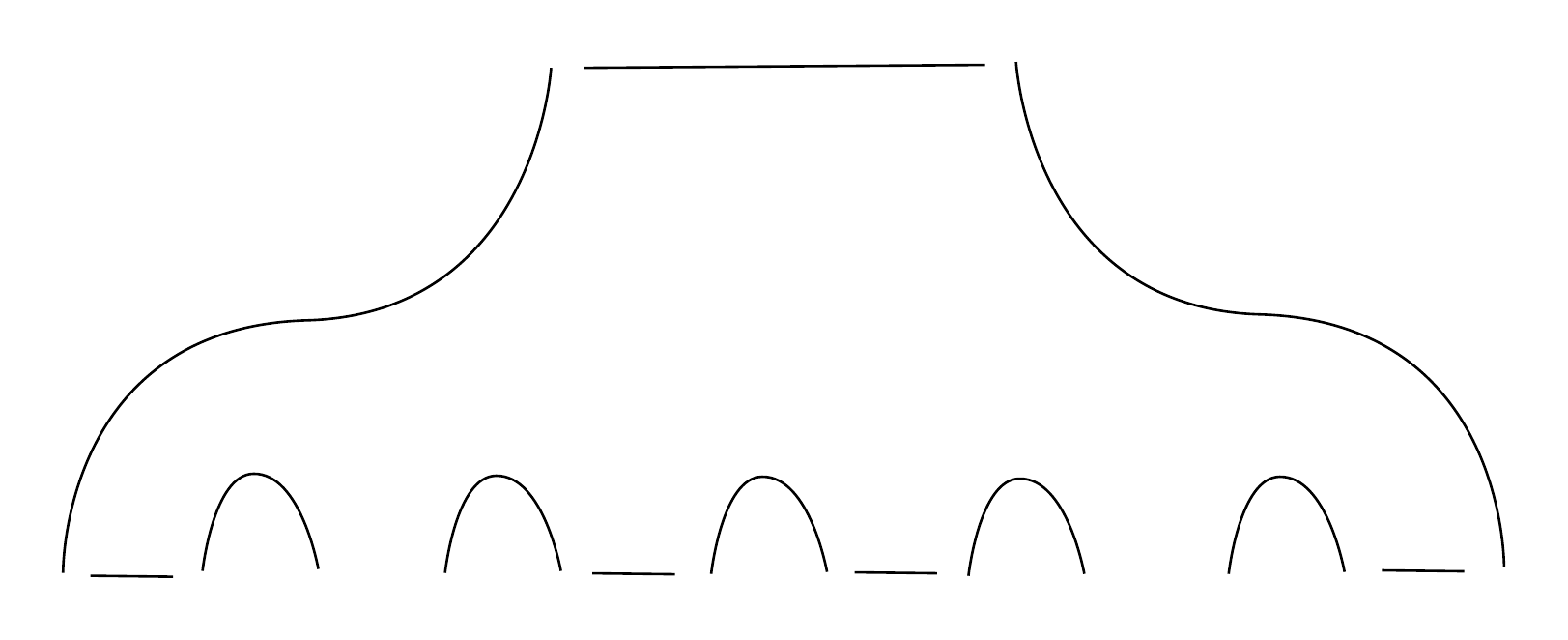}}
			\put(2.48,-0.1){$b_1$}
			\put(4.21,-0.1){$b_{l-1}$}
			\put(5.21,-0.1){$b_{l}$}
			\put(7.03,-0.1){$b_m$}
			\put(6.04,0.4){$\dots$}
			\put(3.32,0.4){$\dots$}
			\put(4.8,2.1){$a$}
			\put(4.73,0.51){$\bullet$}
			\put(4.73,0.75){$p_l$}
		\end{picture}
		\caption{Pointed disc.}
		\label{pointed disk}
	\end{figure}
	This gives us the description of action the map $\tau_n$ on $a$ that is
	\begin{equation}\label{tau}
		\tau_n(a)=\sum_{|\textbf{b}|=0} \#_2\mathcal{M}_{a;\textbf{b},\textbf{p}}\sum_{j=1}^{m}\epsilon_1(b_1)\ldots\epsilon_1(b_{l-1})[\L_{i_{l-1}}]\epsilon_2(b_{l})\ldots\epsilon_2(b_m)
	\end{equation}
	
	\subsection{Effect of Legendrian ambient $0$-surgery}
	Let $r\in \mathbb{N}$. Consider disconnected Legendrian submanifold $\Lambda=\coprod_{j=1}^r\L_j$. Denote by $\Lambda_{S,1}$ the submanifold resulting from the Legendrian ambient $0$-surgery by connecting $\L_1$ and $\L_2$. Now inductively $\Lambda_{S,k}$ denotes the submanifold resulting from the Legendrian ambient $0$-surgery by connecting $\Lambda_{S,k-1}$ and $\L_k$ for $k=3,\ldots, r$. 
	
	Now we will restrict the setting to the first iteration of the $0$-surgery for simplicity. The effect of the ambient Legendrian $0$-surgery using the embedded sphere $\textbf{S}^0$ into $\L_i$ and $\L_j$ for some $i,j\in \{1, \ldots,r\}$ on the Chekanov-Eliashberg algebra $\CE{\Lambda}$ was described by Dimitroglou Rizell in Section 1 of \cite{Surgery}, more specifically, for embedded spheres of all dimensions from $0$ to $n-1$. Denote by $\Lambda_{S}$ the Legendrian submanifold obtained by performing the surgery. The situation is as follows: 
	
	The algebra $\CE{\Lambda_S}$ is isomorphic to the algebra $\CE{\Lambda,S}$ defined as the free product of the algebra $\CE{\Lambda}$ and the line $\Z_2\langle s\rangle$ of a formal generator $s$ which corresponds to $c_S$ a new Reeb chord of $\Lambda_S$ that is of degree $|c_S|=n-1$. Note that this means that for any $\epsilon$ augmentation of $\CE{\Lambda_S}$ we have that $\epsilon(c_S)=0$. The differential on $\CE{\Lambda,S}$ is to be denoted by $\partial_S$ and it decomposes into $\partial_S=\partial+h$ on generators. Here $\partial$ is the differential of $\CE{\Lambda}$ and \begin{equation}
		h(a)=\sum_{|a|-|\textbf{b}|-|\textbf{s}|=1}|\Mc{a}{b_1}{b_m}{a;\textbf{b},\textbf{w}}|s^{w_1} b_1 \ldots b_m s^{w_{m+1}}
	\end{equation} counts number of holomorphic disks with boundary on $L$ and with $w_i$ marked points on the corresponding part of boundary of the disk that is mapped to one of the points that are in the image of $\textbf{S}^0$. Here $|\textbf{s}|=(w_1+\ldots+w_{m+1})(n-1)$. For more details see (\cite{Surgery}, Section 6).
		\begin{figure}[!h]
			\centering
			\setlength{\unitlength}{0.1\textwidth}
			\begin{picture}(10,2.5)
				\put(2.1,0){\includegraphics[scale=0.5]{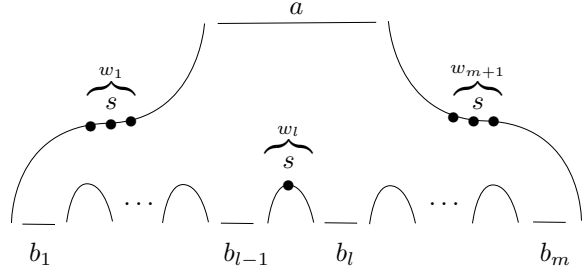}}
				\put(2.48,-0.1){$b_1$}
				\put(4.21,-0.1){$b_{l-1}$}
				\put(5.21,-0.1){$b_{l}$}
				\put(7.03,-0.1){$b_m$}
				\put(6.04,0.4){$\dots$}
				\put(3.32,0.4){$\dots$}
				\put(4.8,2.1){$a$}
				\put(4.73,0.51){$\bullet$}
				\put(4.58,0.73){$\overbrace{s}^{w_{l}}$}
				\put(2.97,1.04){$\bullet$}
				\put(3.33,1.08){$\bullet$}
				\put(3.15,1.06){$\bullet$}
				\put(3.0,1.25){$\overbrace{s}^{w_{1}}$}
				\put(6.2,1.12){$\bullet$}
				\put(6.56,1.08){$\bullet$}
				\put(6.38,1.08){$\bullet$}
				\put(6.23,1.25){$\overbrace{s}^{w_{m+1}}$}
			\end{picture}
			\caption{Twisted disc.}
			\label{twisted disk}
		\end{figure}

		Consider $\epsilon_1,\epsilon_2$ two augmentations of $\CE{\Lambda}$ then $\epsilon^\prime_1,\epsilon^\prime_2$ are two augmentations of $\CE{\Lambda,S}$ induced in the as the pull-back. More specifically, they both vanish on the element $s$ and coincide with the original augmentations of original Reeb chords. Now $\partial_S^{\epsilon_1,\epsilon_2}$ the bilinearized differential decomposes  $\partial_S^{\epsilon_1,\epsilon_2}=\partial^{\epsilon_1,\epsilon_2}+h^{\epsilon_1,\epsilon_2}$ on generators.
		
		If one of $w_j>1$ or if there are two $j\neq j^\prime$ such that $w_j\neq 0$ and $w_{j^\prime}\neq 0$, then the corresponding disk contributes by zero to the bilinearized differential. It must hold that there is exactly one $j\in \{1\,\ldots,m+1\}$ such that $w_j=1$ for a disk to contribute to $h^{\epsilon_1,\epsilon_2}$. Otherwise, it has already contributed to the usual bilinearized differential $\partial^{\epsilon_1,\epsilon_2}$. Let us denote by $\rho_\bullet=h^{\epsilon_1,\epsilon_2}_\bullet$
		\begin{equation*}\label{rho}
			\rho_\bullet=\sum_{|a|-|\textbf{b}|=n}|\Mc{a}{b_1}{b_m}{a;\textbf{b},\textbf{w}}|\sum_{l=1}^{m} \epsilon_1(b_1)\ldots \epsilon_1(b_{l-1})\, s\, \epsilon_2(b_{l}) \ldots\epsilon_2(b_m).
		\end{equation*} 
		
		Let $\epsilon_1,\epsilon_2$ be two augmentations of $\CE{\Lambda}$ over $\Z_2$. Denote by $\epsilon_1^S,\epsilon^S_2$ the induced augmentations of $\CE{\Lambda,S}$.
		
		The inclusion of the line $\Z_2\langle s \rangle_{n-1}$ into $C_\bullet(\Lambda,S)$ makes it into a subcomplex and the fact  that $C_\bullet(\Lambda,S)/\Z_2\langle s \rangle\cong C_\bullet(\Lambda)$ now yields a short exact sequence of complexes 
		\begin{equation*}
			0\to (\Z_2\langle s\rangle_\bullet, \partial^{\epsilon^S_1,\epsilon^S_2}_S)\to (C_\bullet(\Lambda,S),\partial^{\epsilon^S_1,\epsilon^S_2}_S)\xrightarrow{\pi} (C_\bullet(\Lambda),\partial^{\epsilon_1,\epsilon_2})\to 0
		\end{equation*}
		this induces the following long exact sequence in homology
		\begin{equation*}
			\dots\to \lbsch{1}{2}{\bullet}{\Lambda_S}\xrightarrow{\pi_\bullet}\lbch{1}{2}{\bullet}{\Lambda}\xrightarrow{\rho_\bullet}\Z_2\langle s\rangle_{\bullet-1}\to \lbsch{1}{2}{\bullet-1}{L_S}\to \dots.
		\end{equation*}
		Since $\Z_2\langle s\rangle_{\bullet-1}=0$ if $\bullet\neq n$ we obtain the isomorphism:
		\begin{equation*}
			0\to \lbsch{1}{2}{\bullet}{\Lambda_S}\xrightarrow{\pi_\bullet}\lbch{1}{2}{\bullet}{\Lambda}\xrightarrow{\rho_\bullet}0.
		\end{equation*}
		
		\section{Proof of Theorem \ref{main thm}}
		
		First, let us say why we formulate Theorem \ref{main thm} using the map $\tau_n$ and not a map $\tau_0$ like it is done in Theorem \ref{thm: criterBG}, that is,
		\begin{equation}\label{eq: homotcrit}
			\epsilon_1\sim\epsilon_2 \iff \tau_0=0.
		\end{equation}
		To prove \eqref{eq: homotcrit} one has to show that it holds in homology that $$\epsilon_1(\cdot)-\epsilon_2(\cdot)=\tau_0(\cdot)\bullet [\Lambda],$$ where the pairing with the fundamental class $\cdot\bullet[\Lambda]: H_0(\L)\to \Z_2$ is an vector space automorphism of $\Z_2$. The isomorphism has purely formal character in the connected case, however, in the disconnected case, this is no-longer and isomorphism and formulation \eqref{eq: homotcrit} fails even in the following basic example of a disconnected Legendrian submanifold.
		
		As in \cite{geog}
		let us denote by $\L^{(2)}$ the standard $n$-dimensional Legendrian Hopf link in $J^1(\mathbb{R}^n)$ so that the Maslov potential on the upper component is the Maslov potential of the lower component enlarged by 1.
		
		\begin{figure}[!h]
			\centering
			\includegraphics[width=40mm]{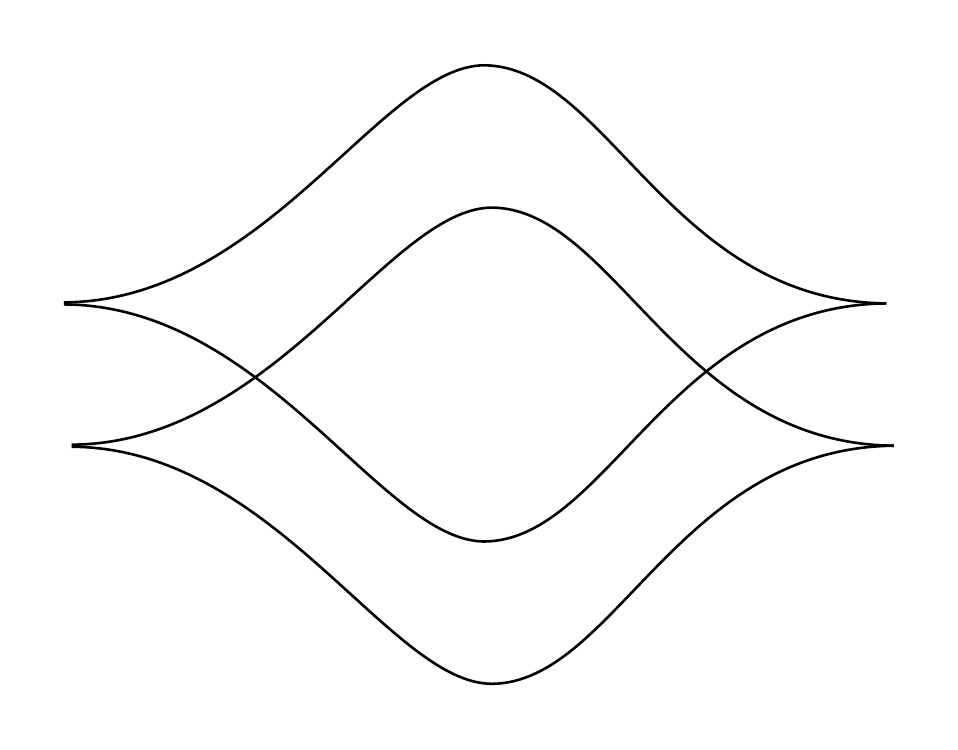}
			\caption{Front projection of the Hopf link $\L^{(2)}$ when $n=1$.}
			\label{fig:hopf link}
		\end{figure}
		
		The Corollary 4.7 in \cite{geog} then implies that there are two augmentations $\epsilon_L$ and $\epsilon_R$ of Chekanov-Eliashberg algebra of $\L^{(2)}$ so that $\epsilon_L(m_{12})=1$ and $\epsilon_R(m_{12})=0$ for a chord $m_{12}$ and they vanish otherwise. 
		
		Therefore, posing $\epsilon_1=\epsilon_R,\epsilon_2=\epsilon_R$, the map $\tau_0$ sends each Reeb chord to zero except for the chord $m_{12}$ which is send to the diagonal of $H_0(\L)\oplus H_0(\L)\simeq\Z_2\oplus\Z_2$. And so, in this case  $\tau_0\neq 0$ even though $\epsilon_1\sim\epsilon_2$.
		
		From now on, $\L$ denotes a disconnected Legendrian submanifold with $r$ connected closed components and $\Lambda_S$ a connected Legendrian submanifold obtained by performing $r-1$ Legendrian ambient surgeries on $\L$. And by $\epsilon_1^S,\epsilon_2^S$ denote the augmentations induced by this surgery so that they vanish on the formal generators added to the Chekanov-Eliashberg algebra and coincide with $\epsilon_1$ and $\epsilon_2$ otherwise. This implies that on chords of $\Lambda$ are decorated with the same number by  both $\epsilon_i^S$ and $\epsilon_i$ for both $i=1,2$. Considering Lemma 3.1 from \cite{geog} we obtain the following proposition
		
		\begin{prop}\label{S epsilon lemma}
			$\epsilon_1\sim \epsilon_2$ if and only if $\epsilon_1^S\sim \epsilon_2^S$
		\end{prop}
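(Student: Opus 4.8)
The plan is to induct on $r$, reducing the statement to the case of a single Legendrian ambient $0$-surgery, where it is essentially Lemma~3.1 of \cite{geog}. Write $\Lambda=\Lambda^{(0)},\Lambda^{(1)},\dots,\Lambda^{(r-1)}=\Lambda_S$ for the chain of Legendrian submanifolds obtained in the surgery process, so that $\Lambda^{(k)}$ arises from $\Lambda^{(k-1)}$ by one ambient $0$-surgery and $\CE{\Lambda^{(k)}}\cong\CE{\Lambda^{(k-1)},S}$ is the free product of $\CE{\Lambda^{(k-1)}}$ with a line $\Z_2\langle s_k\rangle$, $|s_k|=n-1$. By construction the augmentation $\epsilon_i^{(k)}$ of $\CE{\Lambda^{(k)}}$ is the pull-back of $\epsilon_i^{(k-1)}$ along the subalgebra inclusion $\CE{\Lambda^{(k-1)}}\hookrightarrow\CE{\Lambda^{(k-1)},S}$; in particular $\epsilon_i^{(0)}=\epsilon_i$ and $\epsilon_i^{(r-1)}=\epsilon_i^S$. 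So it is enough to prove: for a single surgery taking $\Lambda''$ to $\Lambda'$ and augmentations $\delta_1,\delta_2$ of $\CE{\Lambda''}$ with induced augmentations $\delta_1',\delta_2'$ of $\CE{\Lambda''\!,S}\cong\CE{\Lambda'}$, one has $\delta_1\sim\delta_2$ if and only if $\delta_1'\sim\delta_2'$; iterating this $r-1$ times (composing the homotopy antiderivations appropriately) gives the proposition.

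For the single-surgery equivalence I would work directly with homotopy antiderivations, using that $\partial_S=\partial+h$, that every word appearing in $h(a)$ for any generator $a$ (including $a=s$) contains at least one letter $s$, and that the differential $\partial_S s$ of the new chord $c_S$ lies in the two-sided ideal $(s)$ generated by $s$ (part of the surgery description of \cite{Surgery}). Since $\delta_i'(s)=0$ and $\delta_i'$ is an algebra homomorphism, $\delta_i'$ annihilates every word containing $s$. For the implication $\delta_1\sim\delta_2\Rightarrow\delta_1'\sim\delta_2'$: given a $(\delta_1,\delta_2)$-antiderivation $K$ with $\delta_1-\delta_2=K\circ\partial$, let $K'$ be the $(\delta_1',\delta_2')$-antiderivation determined by $K'|_{\reeb{\Lambda''}}=K$ and $K'(s)=0$ — an antiderivation being determined by its values on generators. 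Then $K'$ restricts to $K$ on the subalgebra $\CE{\Lambda''}$, and $K'$ annihilates every word $w$ containing $s$: in $K'(w)=\sum\delta_1'(\text{prefix})\,K'(\cdot)\,\delta_2'(\text{suffix})$ the only potentially nonzero term would come from a letter $b\neq s$, but then a letter $s$ sits in its prefix or suffix, killing that term. Hence $K'(h(a))=0$ for all generators $a$ and $K'(\partial_S s)=0$, so $\delta_1'-\delta_2'=K'\circ\partial_S$ holds on all generators.

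For the converse, restrict a $(\delta_1',\delta_2')$-antiderivation $K'$ realizing $\delta_1'-\delta_2'=K'\circ\partial_S$ to $\CE{\Lambda''}$: the restriction $\bar K$ is automatically a $(\delta_1,\delta_2)$-antiderivation, and for a chord $a$ of $\Lambda''$ one gets $\bar K(\partial a)=K'(\partial_S a-h(a))=\bigl(\delta_1(a)-\delta_2(a)\bigr)+K'(h(a))$. The remaining task is to absorb the correction $K'\circ h$; by the surgery formula it is governed by the map $\rho$ from the surgery long exact sequence, and handling it — showing the homotopy can be chosen so that this term vanishes, equivalently that the $\rho$-contribution is irrelevant — is exactly what Lemma~3.1 of \cite{geog} provides. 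This converse, i.e.\ verifying that a DGA-homotopy of $\delta_1',\delta_2'$ descends to one of $\delta_1,\delta_2$, is the main obstacle: it forces a precise analysis of the differential $\partial_S c_S$ of the new Reeb chord and of the correction term $h$ produced by the $0$-surgery, which is the substance of \cite{Surgery} as packaged in Lemma~3.1 of \cite{geog}. With the single-surgery case established, the induction over the $r-1$ surgeries, with matching augmentations $\epsilon_i$ and $\epsilon_i^S$, is routine.
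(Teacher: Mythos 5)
Your proposal is correct and follows essentially the same route as the paper: the paper's entire proof of this proposition consists of invoking Lemma 3.1 of \cite{geog}, and you likewise rest the substantive single-surgery step on that lemma, adding only the routine induction over the $r-1$ surgeries and a direct verification of the forward implication by extending the antiderivation with $K'(s)=0$. (Incidentally, the converse you defer to \cite{geog} also closes by a degree count: a homotopy antiderivation has degree $+1$, so $K'(s)=0$ automatically because $|s|=n-1\neq -1$, and the correction term $K'\circ h$ vanishes for the same reason your forward argument gives.)
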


		\begin{lemma}\label{diagram alpha}
			Consider the following diagram:
			\begin{equation}
				\begin{tikzcd}
					\lbsch{1}{2}{0}{\Lambda_S}\arrow{rr}{\pi^+}\arrow{d}{\tau_{+,0}^S}& & \lbch{1}{2}{0}{\Lambda}\arrow{d}{\tau_{+,0}}\\
					\ho{0}{\Lambda_S}\arrow{dr}[below left]{\gamma} & &\ho{0}{\Lambda} \arrow{ll}[above right]{\widetilde{\alpha}}\arrow{ld}{\alpha}\\
					&\Z_2 &  
				\end{tikzcd}
			\end{equation}
			where $\widetilde{\alpha}:\ho{0}{\Lambda}\to \ho{0}{\Lambda}$ is defined as follows. Denote by $[*_{\L_j}]$ for $j=1,\ldots,r$ the classes of points in $\ho{0}{\Lambda}$ that represent distinct components $\L_j$ of $\Lambda$, and $[*_{\Lambda_S}]\in\ho{0}{\Lambda_S}$ a class of point in $\Lambda_S$, that is connected. Moreover, consider the map $\gamma:\ho{0}{\Lambda_S}\to \Z_2$ defined as $a[*_{\Lambda_S}]\to a$ for any $a\in \Z_2$, and let us denote by $\alpha:\ho{0}{\Lambda}\to \Z_2$ the composition $\gamma\circ \widetilde{\alpha}$, i.e. $\widetilde{\alpha}(\bigoplus_{j=1}^ra_j[*_{\L_j}])\mapsto \Big(\sum_{j=1}^r a_j\Big) [*_{\Lambda_S}]$ for any $a_j\in \Z_2$. The map $\pi^+$ denotes the composition of isomorphisms in level zero of the surgery long exact sequence.
			Then the diagram above commutes.
		\end{lemma}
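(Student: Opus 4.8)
The plan is to verify commutativity of the diagram triangle-by-triangle, reducing everything to a comparison of how the perturbing Morse functions on $\Lambda$ and on $\Lambda_S$ interact with the holomorphic curves counted by $\tau_n$ (recall we are at the top level, so $\tau_{+,0}$ here should be read via the duality as the dualized top map — in our grading conventions this is the map $\tau_{+,n}$ of \eqref{tau}, evaluated on chords of degree $n$). The lower triangle $\gamma\circ\widetilde\alpha=\alpha$ is true by definition of $\alpha$, so the only content is the top square $\widetilde\alpha\circ\tau_{+,0}=\tau^S_{+,0}\circ(\pi^+)^{-1}$, equivalently $\tau^S_{+,0}\circ\pi^+ \text{ \emph{read backwards} }$. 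First I would unwind $\pi^+$: by the surgery long exact sequence of the previous subsection, in degree $0$ (equivalently, after dualizing, in degree $n$) the map $\rho_\bullet$ vanishes for degree reasons except at the single formal generator $s$, so $\pi_\bullet$ is an isomorphism $LCH^{\epsilon^S_1,\epsilon^S_2}_\bullet(\Lambda_S)\xrightarrow{\sim}LCH^{\epsilon_1,\epsilon_2}_\bullet(\Lambda)$ at that level; composing over the $r-1$ surgeries gives the isomorphism called $\pi^+$. Concretely, on the chain level $\pi$ is just ``forget the generators $s_1,\dots,s_{r-1}$'', so a class in $LCH_0(\Lambda_S)$ and its image in $LCH_0(\Lambda)$ have literally the same Reeb-chord representative.

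Next I would compare $\tau_{+,0}$ with $\tau^S_{+,0}$ on such a representative. The map $\tau_n$ of \eqref{tau} counts pointed disks $\mathcal M_{a;\mathbf b,\mathbf p}$ with a gradient trajectory running from the boundary to a maximum $m_j$ of the Morse function on the component $\Lambda_j$ containing the marked boundary arc; the output records the class $[\Lambda_{i_{l-1}}]$ of that component. For $\Lambda_S$ one chooses a Morse function with a single maximum (since $\Lambda_S$ is connected) obtained by the standard surgery modification of the disjoint Morse functions on the $\Lambda_j$; by Dimitroglou Rizell's description of the surgered algebra (and the fact that the augmentations $\epsilon_i^S$ agree with $\epsilon_i$ on old chords and vanish on the $s_k$), the moduli spaces of pointed disks for $\Lambda_S$ are in bijection with those for $\Lambda$ — the only change is that every boundary arc which used to sit on some $\Lambda_j$ now sits on the single component $\Lambda_S$, so each summand $[\Lambda_{i_{l-1}}]$ in \eqref{tau} is replaced by $[*_{\Lambda_S}]$. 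That is exactly the effect of post-composing with $\widetilde\alpha$ and then $\gamma$: $\widetilde\alpha$ sends $[*_{\Lambda_j}]\mapsto[*_{\Lambda_S}]$ (additively, so $\sum a_j[*_{\Lambda_j}]\mapsto(\sum a_j)[*_{\Lambda_S}]$). Hence $\tau^S_{+,0}=\widetilde\alpha\circ\tau_{+,0}$ on chain representatives, which descends to homology and gives commutativity of the top square; combined with the definitional lower triangle this proves the lemma.

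The technical care needed — and the step I expect to be the main obstacle — is the identification of the moduli spaces of pointed disks before and after surgery, i.e. making precise that attaching the $0$-handle does not create or destroy rigid pointed configurations contributing to $\tau_n$ and only relabels the component carrying the gradient flow line. This is where one has to invoke \cite{Surgery} carefully: the surgery changes the differential by $\partial_S=\partial+h$, and the marked-point count $h$ is supported in the degree of $s$, which is $n-1$, not $n$; at the level relevant to $\tau_n$ the extra disks with $s$-insertions either fail the dimension count or have already been accounted for in $\partial^{\epsilon_1,\epsilon_2}$, exactly as in the discussion of $\rho_\bullet$ above. One also needs that the perturbing Morse function on $\Lambda_S$ can be chosen compatibly — disjoint copies of the $f_j$ away from the surgery region, with a handle cancelling all but one maximum — and that the gradient trajectories counted by $\tau^S_n$ are, away from that region, the same trajectories counted by $\tau_n$ for $\Lambda$. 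Granting the cobordism/neck-stretching bijection of \cite{dualityseq, Surgery} (which is what ``the bijective correspondence from Theorem 3.6 of \cite{dualityseq}'' in Proposition \ref{signs switching} already used), the rest is bookkeeping of which homology class is recorded.
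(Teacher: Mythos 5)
Your proposal has the right skeleton (the lower triangle is indeed definitional, and the content is the top square), but it rests on a misidentification that makes the argument fail to typecheck against the diagram. You declare that ``$\tau_{+,0}$ here should be read \ldots as the map $\tau_{+,n}$ of \eqref{tau}, evaluated on chords of degree $n$.'' It should not: $\tau_{+,0}$ is the degree-zero component of the duality map, defined on $\lbch{1}{2}{0}{\Lambda}$, landing in $\ho{0}{\Lambda}$, and recording the point classes $[*_{\L_j}]$ via negative gradient flow lines to \emph{minima} of the perturbing Morse function. The formula \eqref{tau} you invoke is for $\tau_n$, which is defined on degree-$n$ chords, lands in $\ho{n}{\Lambda}$, records fundamental classes $[\L_{i_{l-1}}]$, and uses flow lines to \emph{maxima}. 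The map $\widetilde{\alpha}$ in the diagram acts on $\ho{0}{\Lambda}$ by $[*_{\L_j}]\mapsto[*_{\Lambda_S}]$, so an argument phrased in terms of $[\L_{i_{l-1}}]\in\ho{n}{\Lambda}$ and ``a Morse function with a single maximum'' is computing in the wrong group. The relabelling intuition (surgery merges the components, so the recorded class becomes the single class of $\Lambda_S$) is morally the right picture, but as written it is applied to the wrong map.

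Beyond that, your central step --- the bijection between moduli spaces of generalized lifted disks before and after surgery --- is asserted rather than established, and you yourself flag it as ``the main obstacle.'' The paper sidesteps all of this with a direct degree-zero computation: for a chord $q$ of degree $0$ running from $\L_i$ to $\L_j$, rigidity forces the contributing configuration to be the formal (trivial) disk over $q$ with the boundary point at either the start or the end of the chord, whence $\tau_{+,0}(q)=\epsilon_1(q)[*_{\L_j}]+\epsilon_2(q)[*_{\L_i}]$; on the surgered side, Proposition 3.2 of \cite{geog} gives $\tau^S_{+,0}=\epsilon_1+\epsilon_2$ (valued in $\Z_2\cdot[*_{\Lambda_S}]$ since $\Lambda_S$ is connected); applying $\widetilde{\alpha}$ to the first expression yields $(\epsilon_1(q)+\epsilon_2(q))[*_{\Lambda_S}]$ and commutativity is a one-line check on generators. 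To repair your proof you would need to (i) redo the analysis for the actual map $\tau_{+,0}$ with flow lines to minima, and (ii) either prove the moduli-space bijection you invoke or, more economically, observe that in degree $0$ the only rigid configurations are the trivial half-strips, at which point you recover the paper's computation anyway.
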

		\begin{proof}
			Let $m_j$ be minimum of the perturbing Morse function on the component $\L_{j}$ if $q$ is a chord of degree $0$ that starts on $\L_{i}$ and ends on $\L_{j}$, thanks to the rigidity of the formal disk the starting point of a generalized lifted disk must map either to the starting point of $q$ or to the ending point of it.
			In the first case, the disk contributes with $\epsilon_2(q)[*_{\L_{i}}]$ to $\tau_{+,0}(q)$. In the second case,  the disk contributes with $\epsilon_1(q)[*_{\L_{j}}]$ to $\tau_{+,0}(q)$. And so $\tau_{+,0}(q)=\epsilon_1(q)[*_{\L_{j}}]+\epsilon_2(q)[*_{\L_{i}}]$. By the proof of Proposition 3.2 in \cite{geog} we have that $\tau_{+,0}^S=\epsilon_1+\epsilon_2$. Now it is clear that for each $q$ the diagram commutes and so it commutes.
		\end{proof}
		\begin{lemma}\label{form of alpha}
			For each $c\in\ho{0}{\Lambda}$ and the map $\alpha:\ho{0}{\Lambda}\to \Z_2$ from the statement of Lemma \ref{diagram alpha} it holds that \begin{equation}
				\alpha(c)=c\bullet [\Lambda].
			\end{equation} 
		\end{lemma}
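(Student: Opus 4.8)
The plan is to reduce the identity to a computation on the standard generators of $\ho{0}{\Lambda}$ and then invoke the componentwise definition of the intersection pairing together with $\Z_2$-Poincaré duality on each closed component. Both $\alpha$ and the map $c\mapsto c\bullet[\Lambda]$ are $\Z_2$-linear maps $\ho{0}{\Lambda}\to\Z_2$, and $\{[*_{\L_j}]\}_{j=1}^r$ is a basis of $\ho{0}{\Lambda}=\bigoplus_{j=1}^r\ho{0}{\L_j}$, so it suffices to check the equality on each $[*_{\L_j}]$.

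First I would recall the relevant conventions from Section 3.3.3 of \cite{dualityseq}. Since each $\L_j$ is closed and connected, the fundamental class decomposes as $[\Lambda]=\bigoplus_{j=1}^r[\L_j]\in\bigoplus_{j=1}^r\ho{n}{\L_j}=\ho{n}{\Lambda}$, and the intersection pairing is diagonal in the components: for $c=(c_1,\ldots,c_r)$ and $d=(d_1,\ldots,d_r)$ one has $c\bullet d=\sum_{j=1}^r c_j\bullet d_j$. In particular classes supported on distinct components pair to $0$, and on a single connected closed component the pairing $\ho{0}{\L_j}\otimes\ho{n}{\L_j}\to\Z_2$ sends $[*_{\L_j}]\otimes[\L_j]$ to $1$, because a point meets the fundamental cycle of $\L_j$ transversally in exactly one point. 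Hence $[*_{\L_i}]\bullet[\L_j]=\delta_{ij}$ in $\Z_2$.

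Then for the basis element $[*_{\L_j}]$ the right-hand side is $[*_{\L_j}]\bullet[\Lambda]=\sum_{i=1}^r[*_{\L_j}]\bullet[\L_i]=[*_{\L_j}]\bullet[\L_j]=1$. For the left-hand side, in the expansion $[*_{\L_j}]=\bigoplus_{i=1}^r a_i[*_{\L_i}]$ all coefficients vanish except $a_j=1$, so by definition $\widetilde{\alpha}([*_{\L_j}])=\big(\sum_{i=1}^r a_i\big)[*_{\Lambda_S}]=[*_{\Lambda_S}]$ and therefore $\alpha([*_{\L_j}])=\gamma([*_{\Lambda_S}])=1$. The two maps agree on every basis vector, hence on all of $\ho{0}{\Lambda}$, which is the claim.

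I do not expect a genuine obstacle here: the entire content is the standard fact that the $\Z_2$ intersection pairing of a point class with the fundamental class of a connected closed manifold equals $1$, together with the observation that for a disjoint union this pairing splits as a direct sum over components; everything else is linearity and unwinding the definitions of $\widetilde{\alpha}$ and $\gamma$. The only point requiring a little care is consistency of orientation-free ($\Z_2$) conventions with those fixed in \cite{dualityseq}, which is why I would cite Section 3.3.3 there explicitly rather than reprove Poincaré duality.
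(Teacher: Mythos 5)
Your proof is correct and is essentially the argument the paper gives: both reduce the identity to the basis $\{[*_{\L_j}]\}$ of $\ho{0}{\Lambda}$, use the componentwise definition of the intersection pairing to get $[*_{\L_j}]\bullet[\Lambda]=1$ via Poincar\'e duality on the closed component $\L_j$, and unwind the definitions of $\widetilde{\alpha}$ and $\gamma$ to get $\alpha([*_{\L_j}])=1$. The paper merely phrases the same basis check as verifying $\Theta(\alpha)=[\Lambda]$ for the isomorphism $\Theta:(\ho{0}{\Lambda})^*\to\ho{n}{\Lambda}$ induced by the pairing, which is a cosmetic difference.
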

		\begin{proof}
			The map $\alpha$ is an element of $(\ho{0}{\Lambda})^*$ of the dual of $\ho{0}{\Lambda}$. The intersection pairing $\bullet:\ho{0}{\Lambda}\otimes_{\Z_2} \ho{n}{\Lambda}\to\Z_2$ defines $\Theta:(\ho{0}{\Lambda})^*\to\ho{n}{\Lambda}$ an isomorphism that sends $\delta\in (\ho{0}{\Lambda})^*$ to a class $\Theta(\delta)\in \ho{n}{\Lambda}$ so that $p\bullet \Theta(\delta)=\delta(p)$ for each $p\in \ho{0}{\Lambda}$.
			
			Let us claim that $\Theta(\alpha)=[\Lambda]$. Then our claim is equivalent to the statement that for every $p\in \ho{0}{\Lambda}$ it holds that $\alpha(p)=p\bullet[\Lambda]$. Since elements $e_j\in \ho{0}{\Lambda}$ with only non-zero component equal to $[*_{\L_{j}}]$ generate the space $\ho{0}{\Lambda}$ and clearly $\alpha(e_j)=1$. By the definition of the pairing $\bullet$ for the disconnected Legendrian submanifold $\Lambda$ we have that
			\begin{equation}
				e_j \bullet [\Lambda]=[*_{\L_{j}}]\bullet[\Lambda]+\sum_{i=1, i\neq j}^{r}0\bullet [\L_1]=[*_{\L_{j}}]\bullet[\Lambda]=1
			\end{equation}
			where the last component is by the Poincaré duality for closed component $\L_j$ and so $([*_{\L_{1}}]\oplus 0) \bullet ([\L_1]\oplus[\L_2])=1$. The reasoning for the other generating classes $e_j$ is analogous.
		\end{proof}
		
		\begin{proof}[Proof of Theorem \ref{main thm}]
			
			First, consider $\epsilon_1\not\sim \epsilon_2$. Then $\epsilon_1^S\not\sim\epsilon^S_2$ by Proposition \ref{S epsilon lemma}. Thanks to Proposition 3.2 in \cite{geog} we have that $\tau_{+,0}^S\neq 0$. For the sake of contradiction suppose that $[\Lambda]\in \im_{\Z_2}\tau_{-,n}$, then by the exactness of the duality sequence for the negative order of augmentations we obtain that $\sigma_{-,0}([\Lambda])=0$. And so by Proposition \ref{signs switching} we obtain that
			\begin{equation}
				0=\langle\sigma_{-,0}([\Lambda]),q\rangle_-=\tau_{+,0}(q)\bullet [\Lambda]
			\end{equation} 
			for every $q$ Reeb chord that gives rise to a generator of $\lbch{2}{1}{n}{\Lambda}$ and a generator of $\lbch{1}{2}{n}{\Lambda}$. However, using the commutativity from Lemma \ref{diagram alpha} and the form of the map $\alpha$ from Lemma \ref{form of alpha} we obtain that for every Reeb chord $q$
			\begin{equation}
				\gamma\circ\tau^S_{+,0}\circ(\pi^+_0)^{-1}(q)=\alpha\circ\tau_{+,0}(q)=\tau_{+,0}(q)\bullet [\Lambda]=0
			\end{equation} 
			which yields that $\tau_{+,0}^S=0$ because both $\gamma$ and $\pi^+_n$ are isomorphisms. This is the contradiction with $\tau^S_{+,0}\neq 0$.

			On the other hand, assume that $\epsilon_1\sim\epsilon_2$, then by Proposition \ref{S epsilon lemma} we have that $\epsilon_1^S\sim \epsilon_2^S$, which yields $\tau_{+,0}^S=0$ as above. Suppose that $[\Lambda]\not\in \im_{\Z_2}\tau_{-,n}$, then $\sigma_{-,0}([\Lambda])\neq 0$, and thanks to Proposition \ref{prop: almost non-degeneracy}, there exists a chord $q$ so that $\langle \sigma_{-,0}([\Lambda]),q\rangle_-\neq 0$.
			
			If $\tau_{+,0}=0$, then $ 0\neq\langle\sigma_{-,0}([\Lambda]),q\rangle_-=\tau_{+,0}(q)\bullet [\Lambda]=0$ which is a contradiction.
			
			If $\tau_{+,0}\neq 0$, then the commutativity of the diagram in Lemma \ref{diagram alpha}, Lemma \ref{form of alpha}, and the fact that $\tau_{+,0}^S=0$ imply that \begin{equation}
				0=\gamma\circ\tau^S_{+,0}\circ(\pi^+_0)^{-1}(q)=\alpha\circ\tau_{+,0}(q)=\tau_{+,0}(q)\bullet [\Lambda]=\sigma_{-,0}([\Lambda]),q\rangle_-\neq 0
			\end{equation}
			which is a contradiction. This completes the proof.
			
		\end{proof}

		Consider a disconnected Legendrian submanifold $\Lambda$ as above. We want to show that there are no further obstructions regarding the dimension of the image of the map $\tau_{-,n}$ or map $\tau_{+,n}$. In the next section, we will use this result to prove that there is not any other obstruction on the DGA-homotopy of the given augmentation of Chekanov-Eliashberg algebras of Legendrian submanifolds having $n$-spheres as its connected components, since this amounts to the discussion of the geography of bilinearized Legendrian contact homology for such submanifolds.

		\begin{prop}\label{geography}
			For any integer $r\geq 2$ and any non-negative integer $m<r$ there exists a disconnected Legendrian submanifold $\Lambda^{r}$ and augmentations $\epsilon_L^m,\epsilon_R^m$ of its Chekanov-Eliashberg algebra so that $\dim_{\Z_2}\im\tau_{+,n}=m$.
		\end{prop}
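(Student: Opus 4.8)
The plan is to realise $\Lambda^r$ as $r$ Legendrian unknots joined by $r$ Hopf–type clasps, and to extract $\dim_{\Z_2}\im\tau_{+,n}$ from the duality long exact sequence of Theorem \ref{thm: duality long exact sequence}: although $\tau_{+,n}$ is hard to compute directly, its cokernel is controlled by the completely explicit map $\tau_{-,0}$ on the degree-$0$ part of the bilinearised complex, exactly as in Lemma \ref{diagram alpha}. Concretely, work in $J^1(\R^n)$ and put $\Lambda^r=\coprod_{j=1}^r\Lambda_j$, each $\Lambda_j$ a copy of the standard Legendrian unknot, joined by $r$ clasps, each a copy of the clasp of the Hopf link $\L^{(2)}$ of \cite{geog}, arranged so that:
\begin{enumerate}[label=(\roman*)]
	\item for every $j$ there is a Reeb chord $m_j$ of degree $0$ whose initial component is $\Lambda_j$, the components $\Lambda_1,\dots,\Lambda_r$ being pairwise distinct;
	\item apart from $m_1,\dots,m_r$, together with (only in low dimensions) finitely many further degree-$0$ chords $m_j'$, there are no Reeb chords of degree $\le 0$;
	\item the clasps occupy pairwise disjoint regions, so that Corollary 4.7 of \cite{geog} applies clasp by clasp: the values $\epsilon(m_1),\dots,\epsilon(m_r)$ of an augmentation of $\CE{\Lambda^r}$ may be prescribed freely in $\Z_2$, every other Reeb chord being forced to augmentation value $0$.
\end{enumerate}
Such a $\Lambda^r$, with vanishing Maslov class, is obtained by iterating the construction of \cite{geog}, choosing the Maslov potentials of the $\Lambda_j$ and the handedness of the clasps suitably. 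For $0\le m<r$ set $\epsilon^m_R\equiv 0$, and let $\epsilon^m_L$ be the augmentation with $\epsilon^m_L(m_j)=1$ for $1\le j\le r-m$ and value $0$ on every other Reeb chord; these are augmentations by (iii).

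Since each $\Lambda_j\cong S^n$ we have $\ho{n}{\Lambda^r}\cong\Z_2^r$ with basis $[\Lambda_1],\dots,[\Lambda_r]$, dual under the intersection pairing $\bullet$ to the basis $[*_{\Lambda_1}],\dots,[*_{\Lambda_r}]$ of $\ho{0}{\Lambda^r}$, and $[\Lambda^r]=\sum_j[\Lambda_j]$. Applying Theorem \ref{thm: duality long exact sequence} to $(\epsilon^m_L,\epsilon^m_R)$, exactness at $\ho{n}{\Lambda^r}$ gives
\[
\im\tau_{+,n}=\ker\bigl(\sigma_{+,0}\colon\ho{n}{\Lambda^r}\longrightarrow\lbcch{2}{1}{0}{\Lambda^r}\bigr).
\]
By (ii) the degree-$0$ part of the bilinearised homology $\lbch{2}{1}{0}{\Lambda^r}$ is spanned by the (cycle) classes $[m_1],\dots,[m_r]$ (and $[m_j']$); hence, combining the adjointness of Proposition \ref{signs switching} with Proposition \ref{prop: almost non-degeneracy} (applied with the roles of $\epsilon_1,\epsilon_2$ exchanged), for $c\in\ho{n}{\Lambda^r}$ we have $\sigma_{+,0}(c)=0$ if and only if $c\bullet\tau_{-,0}(m_j)=0$ and $c\bullet\tau_{-,0}(m_j')=0$ for all $j$. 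Now, by the description of $\tau_{+,0}$ in the proof of Lemma \ref{diagram alpha} --- a rigid lifted generalised disk on a degree-$0$ chord $q$ running from $\Lambda_a$ to $\Lambda_b$ carries its Morse boundary point either to the start of $q$, contributing $\epsilon_2(q)[*_{\Lambda_a}]$, or to its end, contributing $\epsilon_1(q)[*_{\Lambda_b}]$ --- together with the exchange $\epsilon_1\leftrightarrow\epsilon_2$ that passes from $\tau_{+,0}$ to $\tau_{-,0}$, we obtain for $(\epsilon_1,\epsilon_2)=(\epsilon^m_L,\epsilon^m_R)$ with $\epsilon^m_R\equiv 0$:
\[
\tau_{-,0}(m_j)=\epsilon^m_L(m_j)\,[*_{\Lambda_j}]=\begin{cases}[*_{\Lambda_j}],&1\le j\le r-m,\\ 0,&r-m<j\le r,\end{cases}\qquad \tau_{-,0}(m_j')=0 .
\]
Writing $c=\sum_i c_i[\Lambda_i]$, the above conditions reduce to $c_j=0$ for $1\le j\le r-m$, so $\ker\sigma_{+,0}=\operatorname{span}\{[\Lambda_{r-m+1}],\dots,[\Lambda_r]\}$ and therefore $\dim_{\Z_2}\im\tau_{+,n}=m$, as required. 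In particular $[\Lambda^r]\notin\im\tau_{+,n}$, so by Theorem \ref{main thm} the augmentations $\epsilon^m_L$ and $\epsilon^m_R$ are not DGA-homotopic.

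The computation just given is essentially formal: it uses only exactness of the duality sequence, the adjointness of Proposition \ref{signs switching}, the non-degeneracy of Proposition \ref{prop: almost non-degeneracy}, and the count of $\tau_{\pm,0}$ on degree-$0$ chords coming from Lemma \ref{diagram alpha}. The real work --- and the main obstacle --- is the construction: producing an explicit disconnected $\Lambda^r$ whose Reeb chords satisfy (i)--(iii) with the stated degrees and initial components, and, crucially, verifying that the clasps genuinely decouple, i.e.\ that near each clasp the relevant moduli spaces of holomorphic disks reduce to the local Hopf-link count of \cite{geog} unaffected by the remaining components, so that Corollary 4.7 of \cite{geog} is applicable clasp by clasp and the prescribed $\epsilon^m_L,\epsilon^m_R$ really are augmentations of $\CE{\Lambda^r}$. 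This is bookkeeping on top of \cite{geog} and the surgery formalism of \cite{Surgery}, but it is where all the geometric content of the proposition resides.
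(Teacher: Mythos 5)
Your homological algebra is sound: identifying $\im\tau_{+,n}$ with the annihilator of $\im\tau_{-,0}$ under the intersection pairing via exactness, the adjointness of Proposition \ref{signs switching}, the non-degeneracy of Proposition \ref{prop: almost non-degeneracy}, and the disk count for $\tau_{\pm,0}$ on degree-$0$ mixed chords from Lemma \ref{diagram alpha} is a legitimate (and genuinely different) way to extract the rank. But the proof has a genuine gap, which you name yourself: the Legendrian $\Lambda^r$ satisfying (i)--(iii) is never constructed. Everything hinges on (iii), the claim that the values $\epsilon(m_1),\dots,\epsilon(m_r)$ of an augmentation can be prescribed freely because ``Corollary 4.7 of \cite{geog} applies clasp by clasp.'' That corollary concerns the two-component Hopf link; for $r$ unknots joined by $r$ clasps (a cyclic linking pattern) a holomorphic disk with one positive puncture can a priori carry negative punctures at mixed chords belonging to several different clasps, so the augmentation equations $\epsilon\circ\partial=0$ may couple the clasps, and the extra degree-$0$ chords $m_j'$ you allow in (ii) must also have their values shown to be consistently assignable. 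Verifying that the relevant moduli spaces localize to the individual clasps is exactly the geometric content of the proposition, and deferring it to a closing remark leaves the argument incomplete.

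The paper sidesteps this entirely. It first forms a \emph{connected} Legendrian $\L'$ by an ambient $0$-surgery on the Hopf link $\L^{(2)}$, pulls back the two augmentations $\epsilon_L,\epsilon_R$ to non-homotopic augmentations $\widetilde{\epsilon_L},\widetilde{\epsilon_R}$ of $\CE{\L'}$ (for which $\dim_{\Z_2}\im\tau_{+,n}$ is $0$, while for a pair of equal augmentations it is $1$), and then takes $\L^r$ to be $r$ \emph{unlinked}, horizontally displaced copies of $\L'$. Since there are no Reeb chords between distinct components, the Chekanov--Eliashberg algebra and the duality sequence split, $\tau_{+,n}=\bigoplus_{j=1}^r\tau^{\L_j}_{+,n}$, and $m$ is realized simply by letting the two augmentations agree on exactly $m$ components and differ on the remaining $r-m$; no new moduli-space analysis is required. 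To repair your version, either carry out the clasp construction and prove the localization of disks, or switch to the split construction, where the rank count is immediate.
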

		\begin{proof}
			Perform a Legendrian ambient surgery on those two components of $\L^{(2)}$ producing a Legendrian $\L^\prime$.
			
			\begin{figure}[!h]
				\centering
				\setlength{\unitlength}{0.1\textwidth}
				\begin{picture}(10,2)
					\put(0,0){\includegraphics[width=\textwidth]{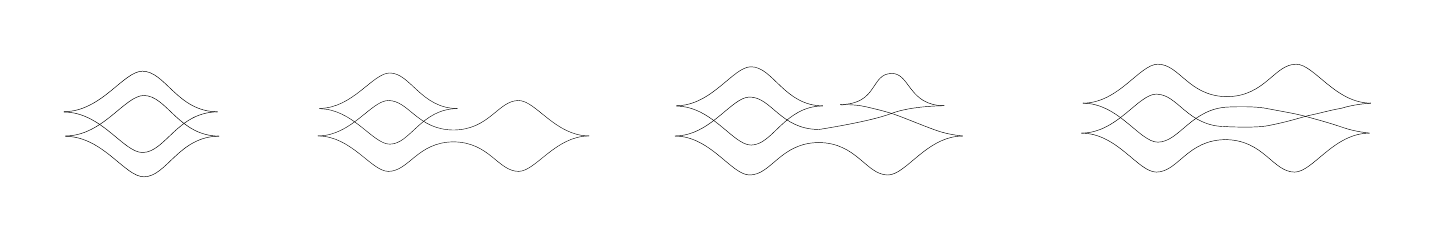}
					}
				\end{picture}
				\caption{The construction of the Legendrian submanifold $\Lambda^\prime$ when $n=1$.}
				\label{fig:isotopy for surgery}
			\end{figure}
			
			Pull-back the augmentations $\epsilon_L$ and $\epsilon_R$ onto the algebra $\CE{\L^\prime}$. Consequently Proposition 3.2 from \cite{geog} implies that those two pull-backed augmentations $\widetilde{\epsilon_L}$ and $\widetilde{\epsilon_R}$ are not DGA-homotopic since $\partial (m_{12})=0$ by Proposition 4.5 and $\tau_{+,0}(m_{12})=\widetilde{\epsilon_L}(m_{12})-\widetilde{\epsilon_R}(m_{12})=1\neq 0$. The fact \ref{odkaz na jedn} yields that $\dim_{\Z_2}\im\tau_{+,n}=0$.
			
			\begin{figure}[!h]
				\centering
				\setlength{\unitlength}{0.1\textwidth}
				\begin{picture}(10,2)
					\put(0,0){\includegraphics[width=\textwidth]{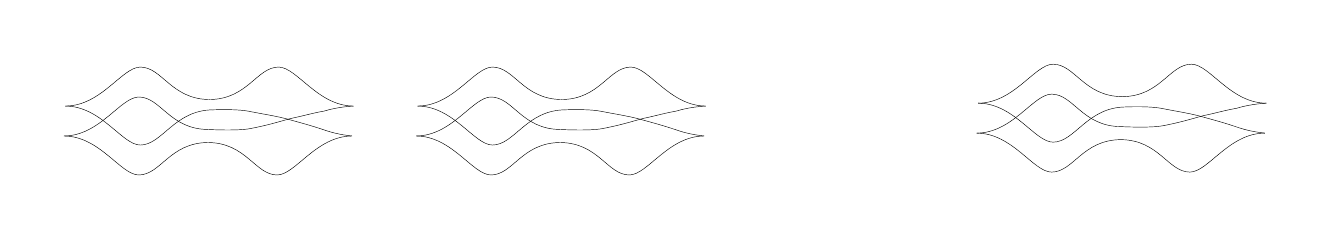}
					}
					\put(1.46,0){$\Lambda_1$}
					\put(4.18,0){$\Lambda_2$}
					\put(8.48,0){$\Lambda_r$}
					\put(6.2,0.8){$\dots$}
				\end{picture}
				\caption{Front projection of the Legendrian $\L^r$  when $n=1$.}
				\label{fig:minimal link}
			\end{figure}
			
			The disconnected Legendrian submanifold $\L^r=\coprod_{j=1}^r\L_j$ is defined as $r$ unlinked horizontally displaced copies $\L_{j}$ of $\L^\prime$. That means that the bilinearized complex splits into $r$ copies because there are no Reeb chords among distinct components.
			In particular,\begin{equation*}
				\tau^{\L_p}_{+,n}=\bigoplus_{j=1}^r\tau^{\L_{j}}_{+,n}
			\end{equation*} that is for the following augmentations the rank of the resulting map $\tau^{\L^r}_{+,n}:LCH_n^{\epsilon_L^m,\epsilon_R^m}(\L^r)\to \ho{n}{\L^r}=\bigoplus_{j=1}^r\ho{n}{\L_j}$ is the sum of the ranks of the maps $\tau^{\L_{j}}_{+,n}:LCH_n^{\widetilde{\epsilon_L},\widetilde{\epsilon_R}}(\L_{j})\to\ho{n}{\L_{j}}$ playing the same role as $\tau^{\L^r}_{+,n}$ but in the duality exact sequence of the corresponding component.
			Fix some $m\in \{ 0,\ldots, r-1\}$. Now define the augmentations by the assignment for any $c$ a chord of $\L_{j}$ $
			\epsilon_L^m(c)=\widetilde{\epsilon_L}(c)
			$ and if $m=0$, then $\epsilon_R^m(c)=\widetilde{\epsilon_R}(c)$, otherwise
			\begin{equation*}
				\epsilon_R^m(c)=\begin{cases}
					\widetilde{\epsilon_L}(c); & 1\leq j\leq m,\\
					\widetilde{\epsilon_R}(c); & m<j\leq r.
				\end{cases}
			\end{equation*}
			By the construction of $\epsilon_L^m$ and $\epsilon_R^m$ it is clear that since $m\neq r$ they are not DGA-homotopic because otherwise we could factor the corresponding  $(\epsilon_L^m,\epsilon_R^m)$-derivative through the chords of $r$-th component $\L^{r}$, which is impossible, and thus $$\dim_{\Z_2}\im\tau^{\L^r}_{+,n}=\sum_{j=1}^r \dim_{\Z_2}\im\tau^{\L_{j}}_{+,n}=m\cdot1+(r-m)\cdot 0=m$$ as we desired.

		\end{proof}
		
		\section{Proof of Theorem \ref{geography1} and Theorem \ref{geography2}}
		
		For a Legendrian submanifold $\Lambda$ and two non-homotopic augmentations $\epsilon_1,\epsilon_2$ let us denote
		\begin{equation}
			P^+=P_{\L,\epsilon_1,\epsilon_2}(t)=\sum_{k\in\Z} \dim_{\Z_2} LCH^{\epsilon_1,\epsilon_2}_{k}(\L)\,\, t^k.
		\end{equation}
		and similarly $P^-$ for the opposite ordering of augmentations. Those Laurent polynomial split as
		$P^\pm=\textbf{p}^\pm+\textbf{q}^\pm$, where $\textbf{q}^\pm(t)=\sum_{k\in \Z}\dim_{\Z_2}\im \tau_{\pm,k}$ and $\textbf{p}^\pm(t)=\sum_{k\in \Z}\dim_{\Z_2}\ker \tau_{\pm,k}$.
		
		Similarly to \cite{geog} we can define the following.
		
		\begin{definition}\label{def: admissible polyn}
			Let $n$ be natural numbers and $P\in \N_0[t,t^{-1}]$ be a Laurent polynomial so that $P=\textbf{q}+\textbf{p}$. We say that $P$ is lbLCH-admissible if:
			\begin{enumerate}[label=(\roman*)]
				\item $\textbf{q}\in\N_0[t]$ is a polynomial, $\deg(q)\leq n$, and $\textbf{q}(0)\geq 1$,
				\item if $n$ is odd, then $\textbf{p}(-1)$ is even, and if $n$ is even, then $\textbf{p}(-1)=0$. 
			\end{enumerate}
		\end{definition}

		\begin{proof}[Proof of Theorem \ref{geography1}]
			
			To prove (i), $\im\tau_{\pm,k}\subset \ho{k}{L}$ and so $\textbf{q}^\pm\in\Z[t]$ and $\deg(\textbf{q}^\pm)\leq n$. Theorem \ref{main thm} and the fact that $\epsilon_1\not\sim\epsilon_2$ imply that the class $[L]$ is not in the image of $\tau_{\pm,n}$. Therefore,  $q_n^\pm=\dim_{\Z_2}\im\tau_{\pm,n}<r$. By adjointness, we know that $\dim_{\Z_2}\sigma_{\mp,n}<r$, thus $\dim_{\Z_2}\ker\tau_{\mp,0}=r-\dim_{\Z_2}\sigma_{\mp,n}\geq 1$. 
			
			The proof of (ii) coincides with the proof of Proposition 4.2 in \cite{geog}. Therefore, both $P_{\L,\epsilon_1,\epsilon_2}$ and $P_{\L,\epsilon_2,\epsilon_1}$ are lbLCH-admissible.
		\end{proof}
		
		\begin{proof}[Proof of Theorem \ref{geography2}]
			Let us have arbitrary $P=\textbf{q}+\textbf{p}$ lbLCH-admissible Poincaré polynomial.
			
			For the Hopf link $\L^{(2)}$ we have that $P_{\L^{(2)},\epsilon_L,\epsilon_R}=1+t^n$. And because by Proposition 3.5 in \cite{geog} for non-homotopic augmentations $\tilde{\epsilon}_L,\tilde{\epsilon}_R$ the connected sum acts as subtraction of the term $t^n$ we obtain that for the disconnected Legendrian submanifold $\L^m$ from Proposition \ref{geography} for $m=\textbf{q}(0)-1$ its Poincaré polynomial is the following constant
			\begin{equation}\label{poincaré pol}
				P_{\L^{m},\epsilon_L^{m},\epsilon_R^{m}}=m.
			\end{equation}  Note that if a link consists of multiple components so that their projections into $T^*M$ does not intersect, then the Poincaré polynomial is given by the sum of Poincaré polynomials of corresponding components.
			
			Now choose the Laurent polynomials  $\tilde{\textbf{p}},\tilde{\textbf{q}}$ with non-negative coefficients so that
			\begin{gather}
				\textbf{p}=\tilde{\textbf{p}}\,\, \text{ and }\,\, \textbf{q}-\textbf{q}(0)-q_nt^n=\tilde{\textbf{q}}-1,
			\end{gather}
			 and moreover, let  $\Psi_{\tilde{\textbf{q}},\tilde{\textbf{p}}}=\L^{(2N)}_{p}$ be the connected Legendrian submanifold in $J^1(M)$ and $\bar{\epsilon}_L,\bar{\epsilon}_R$ the augmentations of its Eliashberg-Chekanov algebra so that $P_{\Psi_{\tilde{\textbf{q}},\tilde{\textbf{p}}},\bar{\epsilon}_L,\bar{\epsilon}_R}=\tilde{\textbf{q}}+\tilde{\textbf{p}}$ that is due to Bourgeois and Galant (see \cite{geog}).
			
			Consider the disconnected Legendrian submanifold $\L$ that consists of the submanifold $\Psi_{\tilde{\textbf{q}},\tilde{\textbf{p}}}$, $q_n$ horizontally displaced copies of $\L_{W}$, the $n$-dimensional Legendrian lift of the Whitney immersion with the single augmentation $\epsilon_w$ and polynomial $P_{\L_W,\epsilon_w}=t^n$, and the disconnected Legendrian submanifold $\L^{m}$ with the following augmentations:
			\begin{equation}
				\epsilon_1=\begin{cases}
					\epsilon_L^{m}; & \text{ on chords of } \L^{m},\\
					\bar{\epsilon}_L; & \text{ on chords of } \Psi_{\tilde{\textbf{q}},\tilde{\textbf{p}}},\\
					\epsilon_w; & \text{ on chords of } \L_{W},
				\end{cases}
			\end{equation}
			and for $\epsilon_2$ analogously with $R$ and $L$ exchanged. Observe that since the Chekanov-Eliashberg algebra of the Legendrian $\L$ splits, then the DGA-homotopy descends to the components, but on the copy corresponding to $\Psi_{\tilde{\textbf{q}},\tilde{\textbf{p}}}$ the augmentations $\epsilon_{1}$ and $\epsilon_{2}$ are not DGA-homotopic. The claim that the disconnected Legendrian submanifold $\L$ has the Poincaré polynomial equal to $P$ easily follows
			\begin{equation}
				P_{\L,\epsilon_{1},\epsilon_{2}}=m+\tilde{\textbf{p}}+\tilde{\textbf{q}}+q_nt^n=\textbf{q}(0)-1+\tilde{\textbf{p}}+\tilde{\textbf{q}}+q_nt^n=\textbf{p}+\textbf{q}=P.
			\end{equation}

		\end{proof}

		\bibliographystyle{unsrt}     
		\bibliography{GeographyversionII.bib}

	\end{document}